\renewcommand{\epsilon}{\varepsilon}
\renewcommand{\phi}{\varphi}
 \newcommand{\Lp}{L_{\vec{p}}(\bR^n)}
 \newcommand{\bZ}{\mathbb{Z}}
\newcommand{\bR}{\mathbb{R}} \newcommand{\bN}{\mathbb{N}}
\newcommand{\cF}{\mathcal{F}} 
\newcommand{\dS}{\mathcal{S}(\bR^n)} \setcounter{tocdepth}{2}
\newcommand{\supp}{\operatorname{supp}}
\newcommand{\qp}{\mathcal{Q}}
\newcommand{\Zd}{\mathbb{Z}^n}
\newcommand{\Zn}{\mathbb{Z}^n}
\newcommand{\Rn}{\mathbb{R}^n}
\newcommand{\brac}[1]{\langle #1\rangle}
\newtheorem{theorem}{Theorem}[section]
\newtheorem{lemma}[theorem]{Lemma}
\newtheorem{proposition}[theorem]{Proposition}
\newtheorem{corollary}[theorem]{Corollary}
\theoremstyle{definition}
\newtheorem{definition}[theorem]{Definition}
\newtheorem{example}[theorem]{Example}
\theoremstyle{remark}
\newtheorem{remark}[theorem]{Remark}
\numberwithin{equation}{section}
\renewcommand{\d}{\, \text{d}}
\def\bZ{{\mathbb Z}}
\def\bN{{\mathbb N}}
\def\bC{{\mathbb C}}
\def\bR{{\mathbb R}}
\def\cF{\mathcal{F}}
\def\cM{\mathcal{M}}
\begin{document}%\nocite{*}
\title[Discrete Decomposition of Mixed-Norm $\alpha$-modulation spaces]{ Discrete Decomposition of Mixed-Norm $\alpha$-modulation spaces}
\author[M.\ Nielsen]{Morten Nielsen}
\address{Department of Mathematical Sciences\\ Aalborg
  University\\ Skjernvej 4A\\ DK-9220 Aalborg East\\ Denmark}
\email{mnielsen@math.aau.dk}
\subjclass[2000]{Primary 47G30, 46E35; Secondary 
47B38}
\begin{abstract}
    We study a class of almost diagonal matrices compatible with the mixed-norm $\alpha$-modulation spaces $M_{\vec{p},q}^{s,\alpha}(\mathbb{R}^n)$, $\alpha\in [0,1]$, introduced recently by Cleanthous and Georgiadis [Trans.\ Amer.\ Math.\ Soc.\ 373 (2020), no. 5, 3323-3356].  The class of almost diagonal matrices is shown to be closed under matrix multiplication and we connect the theory to the continuous case by identifying a suitable notion of  molecules for the mixed-norm $\alpha$-modulation spaces. We show that the  "change of frame" matrix for a pair of  time-frequency frames for the mixed-norm $\alpha$-modulation consisting of suitable molecules is almost diagonal. As examples of applications, we  use the almost diagonal matrices to construct compactly supported frames for the mixed-norm $\alpha$-modulation spaces, and to obtain a straightforward boundedness result for Fourier multipliers on the mixed-norm $\alpha$-modulation spaces. 

\end{abstract}
\keywords{Modulation space, $\alpha$-modulation space, Besov space, almost diagonal matrix,
Fourier multiplier operator} 
\maketitle

\section{Introduction}

The $\varphi$-transform approach introduced by Frazier and Jawerth, see \cite{Frazier1985,Frazier1990}, for the study of Besov and Triebel-Lizorkin spaces on $\bR^n$ provides a powerful framework for analysing smoothness spaces using discrete decompositions, and their general approach has since been extended and adapted to cover smoothness spaces in various other setting, see, e.g., \cite{MR3981281,Bownik2006,Nielsen2012}. One advantage of the discrete approach is that it opens for a more straightforward way to study boundedness of various linear operators, such as Fourier multipliers and pseudo-differential operators, by studying their matrix representation relative to the $\varphi$-transform decomposition system. An essential component to facilitate the matrix approach is to have an algebra of almost diagonal matrices available. 

The desire to  better analyse anisotropic phenomena using models based on e.g.\ partial-differential or pseudo-differential operators has recently generated  considerable interest function spaces in anisotropic and mixed-norm settings, see for example \cite{AI, BB, MR2423282, Bownik2006,Bownik2005, MR3622656,MR3573690, JS,Cleanthous2019,MR3573690,MR2319603} and reference therein. It is therefore of interest to adapt the  $\varphi$-transform machinery to the various 
anisotropic and mixed-norm settings that has proven to be useful for applications, and the purpose of the present paper is to extend the $\varphi$-tranform machinery to the family of mixed-norm $\alpha$-modulation spaces $M_{\vec{p},q}^{s,\alpha}(\mathbb{R}^n)$, $\alpha\in [0,1]$, introduced recently by Cleanthous and Georgiadis \cite{MR4082240}.

The $\alpha$-modulation spaces is a family of smoothness spaces that contains the Besov spaces, and the
modulation spaces introduced by Feichtinger \cite{fei}, as special
cases. The family offers the flexibility to tune general time-frequency properties measured by the smoothness norm by adjusting the $\alpha$-parameter.  
In the non-mixed-norm setting, the $\alpha$-modulation spaces were introduced by Gr\"obner
\cite{Groebner1992}.   Gr\"obner used the general framework of decomposition type
Banach spaces considered by Feichtinger and Gr\"obner in
\cite{MR87b:46020,MR89a:46053} to build the $\alpha$-modulation
spaces.  The parameter $\alpha$ determines a specific type of
decomposition of the frequency space $\bR^n$ used to define
the $\alpha$-modulation $M^{s,\alpha}_{p,q}(\bR^n)$.

 A $\varphi$-transform was introduced and studied for the mixed-norm $\alpha$-modulation spaces by Cleanthous and Georgiadis \cite{MR4082240}. In the present paper, we identify a suitable algebra of almost diagonal matrices compatible with the $\varphi$-transform for the mixed-norm $\alpha$-modulation spaces $M_{\vec{p},q}^{s,\alpha}(\mathbb{R}^n)$. The matrices are  defined using a joint time-frequency localization condition that was first considered for general decomposition spaces by Rasmussen and the author in \cite{Nielsen2012}. The definition of the almost diagonal matrices also leads to a natural definition of  time-frequency molecules for the spaces, and it is shown that the "change-of-frame"-matrix between two suitable families of molecules will automatically be almost diagonal.  
 
 The algebra of almost diagonal matrix approach can be used as a tool to study a great variety of topics. To support this claim, we consider two specific applications that should be of independent interest. Firstly, we use the almost diagonal matrices to obtain a stable perturbation result for the frame formed by the $\varphi$-system. We use the perturbation result to prove that there exist compactly supported frames for the mixed-norm $\alpha$-modulation spaces. Compactly supported frames are known to be useful for e.g.\ numerical purposes. Secondly, we use the molecular approach to obtain a "painless" boundedness result for Fourier multiplies on the mixed-norm $\alpha$-modulation spaces, where "painless" refers to the fact that only a few very elementary integral estimates are needed for the result. We mention that a more technically involved approach to Fourier multiplies is considered in   \cite{MR4082240}.
 
 Another potentially useful application is to pseudo-differential operators on mixed-norm $\alpha$-modulation spaces with the $\varphi$-transform approach, but we leave this study to the reader for the sake of brevity. Pseudo-differential operators on mixed-norm $\alpha$-modulation is considered by the author in \cite{}, where  another approach is used.

% In the non-mixed-norm case, pseudodifferential operators on $\alpha$-modulation spaces have been
% considered in \cite{MR2035141, MR2035296,MR2423282,MR2292720}.  
% % Pseudodifferential operators
% %on $\alpha$-modulation spaces have also been studied in \cite{MR2035296,MR2423282,MR2292720}. 
% Pseudodifferential operators on modulation spaces 
% were first studied by Tachizawa \cite{p4},  
% and later by a number of authors, see e.g.\
% \cite{p2,p1,n4,MR1702232,MR1857227,p3,n5}. 
% In the mixed-norm setting, pseudodifferential operators on Besov and Triebel-Lizorkin spaces have been studied in \cite{MR3682609,MR3573690,MR4022088}. 

The structure of the paper is as follows. In Section
\ref{sec:mn} we introduce mixed-norm Lebesgue and $\alpha$-modulation spaces 
based on a so-called bounded admissible
partition of unity (BAPU) adapted to the mixed-norm setting.  Section
\ref{sec:mn} also introduces the maximal function estimates that will be needed to prove the main result. 
Section \ref{sec:ms} reviews the $\varphi$-transform introduced in  \cite{MR4082240}, and in Section \ref{sec:alm} we define the notion of molecules for mixed-norm $\alpha$-modulation spaces, along with a compatible notion of almost diagonal matrices. In Section \ref{sec:alge}, we show that the almost diagonal matrices form a suitable matrix algebra, and the boundedness of the matrices on discrete mixed-norm $\alpha$-modulation spaces is studied.  The paper concludes with Section \ref{sec:appl}, where two applications of the theory are considered. Specifically, it is shown that one can obtain compactly supported frames for   mixed-norm $\alpha$-modulation spaces using a perturbation approach, and boundednesss results for Fourier multipliers on the  mixed-norm $\alpha$-modulation spaces are considered.

\section{Mixed-norm  Spaces}\label{sec:mn}
In this section we introduce the mixed-norm Lebesgue spaces along with some needed maximal function estimates. Then we introduce mixed-norn $\alpha$-modulation spaces 
based on  so-called bounded admissible
partition of unity adapted to the mixed-norm setting. 
\subsection{Mixed-norm Lebesgue Spaces}
Let $\vec{p}=(p_1,\dots,p_n)\in(0,\infty]^n$ and $f:\mathbb{R}^n\rightarrow \mathbb{C}$. We say that a measurable function $f:\bR^n\rightarrow \bC$ belongs to $L_{\vec{p}}=L_{\vec{p}}(\mathbb{R}^n)$ provided
\begin{equation}\label{Lp}
\|f\|_{\vec{p}}:=\left(\int_\mathbb{R}\cdots\left(\int_\mathbb{R}\left(\int_\mathbb{R} |f(x_1,\dots,x_n)|^{p_1} dx_1\right)^{\frac{p_2}{p_1}} dx_2\right)^{\frac{p_3}{p_2}}\cdots dx_n\right)^{\frac{1}{p_n}}<\infty,
\end{equation}
with the standard modification when $p_j=\infty$ for some $j\in \{1,\ldots,n\}$.
Note that when $\vec{p}=(p,\dots,p),$ then $L_{\vec{p}}$ coincides with the standard $L_p$.
 One can verify that $\|\cdot\|_{\vec{p}}$ is a quasi-norm, and in fact a norm when $\min\{p_1,\dots,p_n\}\geq 1$, and turns $(L_{\vec{p}},\|\cdot\|_{\vec{p}})$ into a (quasi-)Banach space.  The quasi-norm $\|\cdot\|_{\vec{p}}$ obeys the following sub-additivity property 
 \begin{equation}\label{suba}
     \|f+g\|_{\vec{p}}^r\leq \|f\|_{\vec{p}}^r+\|g\|_{\vec{p}}^r, \qquad f,g\in L_{\vec{p}}(\mathbb{R}^n),
 \end{equation} 
 for every $0<r\leq\min\{1,p_1,p_2,\ldots,p_n\}$.
 For additional properties of $L_{\vec{p}}$, see, e.g., \cite{AI,MR370171,MR126155,Cleanthous2019}.

\subsection{Maximal operators}\label{subsec:maxop}
The maximal operator will be central to most of the estimates considered in this paper. Let $1\leq k\leq n$. We define
\begin{equation}\label{MK}
M_k f(x)=\sup\limits_{I\in I_x^k} \dfrac{1}{|I|} \int_I |f(x_1,\dots,y_k,\dots,x_n)| dy_k,\;\;f\in L^{1}_{loc}(\bR^n),
\end{equation}
where $I_x^k$ is the set of all intervals $I$ in $\mathbb{R}_{x_k}$ containing $x_k$.\

We will use extensively the following iterated maximal function:
\begin{equation}\label{Max1}
\cM_\theta f(x):=\left(M_n(\cdots(M_1|f|^\theta)\cdots)\right)^{1/\theta}(x),\;\theta>0,\;x\in\mathbb{R}^n.
\end{equation}

\begin{remark} If $Q$ is a rectangle $Q=I_1\times\dots\times I_n,$ it follows easily that for every locally integrable $f$
\begin{equation}\label{rectangle}
\int_Q |f(y)| dy\leq |Q| \cM_1 f(x)=|Q|\cM_\theta^\theta|f|^{1/\theta}(x), \ \theta>0,\ x\in\bR^n.
\end{equation}
\end{remark}

We shall need the following mixed-norm version of the maximal inequality, see \cite{MR370171,JS}:\
If $\vec{p}=(p_1,\dots,p_n)\in(0,\infty)^n$ and $0<\theta<\min\{p_1,\dots,p_n\}$ then there exists a constant $C$ such that
\begin{equation}\label{max}
\|\cM_\theta f\|_{\Lp}\leq C \|f\|_{\Lp}
\end{equation}

An important related estimate is a Peetre maximal function estimate, which will be one of our main tools in the sequel. 
For $ a = (a_1,\ldots,a_n) \in\bR^n$, $r_0>0$,  we consider  the corresponding cube $R[{a},{r_0}]$ defined as
\begin{equation}\label{eq:cube}
R[{a},{r_0}]:= [a_1-r_0,a_1+r_0]\times\cdots\times[a_n-r_0,a_n+r_0].
\end{equation}
For  $f\in L_1(\bR^n)$, we let  $$\mathcal{F}(f)(\xi):=(2\pi)^{-n/2}\int_{\bR^n}
f(x)e^{- i x\cdot\xi}\,dx,\qquad \xi\in\bR^n,$$
denote the Fourier transform, and we use the standard notation $\hat{f}(\xi)=\mathcal{F}(f)(\xi)$. With this normalisation, the Fourier transform extends to a unitary transform on $L^2(\bR^n)$ and we denote the inverse Fourier transform by $\mathcal{F}^{-1}$.

For every $\theta>0,\;$there exists a constant $c=c_\theta>0,$ such that for every $R>0$ and $f$ with $\supp(\hat{f})\subset c_f+R[-2,2]^n$, see \cite{MR3573690},
\begin{equation}\label{M3}
\sup_{y\in\mathbb{R}^n} \dfrac{|f(y)|}{\brac{R(x-y)}^{n/\theta}}\leq c\cM_\theta f(x),\;x\in\mathbb{R}^n,
\end{equation}
where the bracket is given  by  $\brac{x}:=(1+|x|^2)^{1/2}$, $x\in\bR^n$, with $|\cdot|$ denoting the Euclidean norm.
 In particular, the constant $c$ in Eq.\ \eqref{M3} is independent of the point $c_f\in \bR^n$. For a proof of the estimate, see \cite{MR3573690}.
\subsection{Mixed-norm Modulation spaces}\label{sec:mod_space}
In this section we recall the definition of mixed-norm $\alpha$-modulation spaces 
as introduced by Cleanthous and Georgiadis \cite{MR4082240}. The
$\alpha$-modulation spaces form a family of smoothness spaces that contain  modulation and Besov spaces as special ``extremal'' cases. The spaces
are defined by a parameter $\alpha$, belonging  to the interval
$[0,1]$. This parameter determines a segmentation of the frequency
domain from which the spaces are built.

\begin{definition}\label{def:cov}
A countable collection $\qp$ of measurable subsets $Q\subset \bR^n$ is called an
admissible covering of $\bR^n$ if 
\begin{itemize}
\item[i.]$\bR^n=\cup_{Q\in \qp} Q$ 
\item [ii.] There
exists $n_0<\infty$ such that 
  $\#\{Q'\in\qp:Q\cap Q'\not=\emptyset\}\leq n_0$ for all $Q\in\qp$.
\end{itemize}  
    An
  admissible covering is called an $\alpha$-covering, $0\leq
  \alpha\leq 1$, of $\bR^n$ if
 \begin{itemize}
\item[iii.] 
  $|Q|\asymp \langle x\rangle^{\alpha n}$ (uniformly) for all $x\in Q$ and for all $Q\in\qp$,
\item [iv.] There exists a constant $ K <\infty$ such that 
$$\sup_{Q\in\qp} \frac{R_Q}{r_Q}\leq K,$$
where $r_Q :=\sup\{r\in [0,\infty):\exists c_r\in\bR^n: B(c_r,r)\subseteq Q\}$
and $R_Q :=\inf\{r\in (0,\infty):\exists c_r\in\bR^n: B(c_r,r)\supseteq Q\}$, where $B(x,r)$ denotes the Euclidean ball in $\bR^n$ centered at $x$ with radius $r$.
\end{itemize}
\end{definition}
\begin{remark}
For $Q\in\qp$, condition iv.\ in Definition \ref{def:cov} ensures that we have the following  containment in cubes, where we use the notation introduced in Eq.\ \eqref{eq:cube},
$$R[\xi_1,r_Q/(2\sqrt{n})]\subseteq Q\subseteq R[\xi_2,R_Q],$$
for some $\xi_1,\xi_2\in Q$. This in turn implies that
$$|Q|\asymp R_Q^n\quad\text{ and }\quad \|\mathbf{1}_Q\|_{\vec{r}}\asymp R_Q^{\frac{1}{r_1}+ \frac{1}{r_2}+\cdots+\frac{1}{r_n}},\qquad \text{for }\vec{r}=(r_1,r_2,\ldots,r_n)\in(0,\infty]^n,$$
uniformly in $Q\in\qp$, with $\mathbf{1}_Q$ the characteristic function of $Q$.
\end{remark}

The following example was first considered in \cite{Groebner1992}.
\begin{example}\label{ex:cov}
For $\alpha\in [0,1)$, there exists $c_0>0$ such that for any $c_1\geq c_0$, the family of sets
$$B_k^\alpha:=B\big(k r_k, c_1 r_k\big),\qquad k\in\Zn,$$
with $B(c,r)$ denoting the (open) Euclidean ball of radius $r>0$ centered at $c\in\bR^n$, and $r_k:=\langle k\rangle^{\frac{\alpha}{1-\alpha}}$,  form an  $\alpha$-covering.
\end{example}

The  constants  $r_k$ together with derived constants $\xi_k$ and $x_{k,\ell}$ will often appear in  subsequent estimates, so let us fix the notation. We put
\begin{equation}
\label{eq:rk}
r_k:=\langle k\rangle^{\frac{\alpha}{1-\alpha}},\quad
\xi_k:=k r_k=k\langle k\rangle^{\frac{\alpha}{1-\alpha}},\quad
\text{ and } \quad x_{k,\ell}:=\frac{\pi}{a}r_k^{-1}\ell,\qquad r,\ell\in\Zn,
\end{equation}
with $a$ a constant that for technical reasons is often chosen such that $a\geq \max\{2c_1,\pi\sqrt{n}/2\}$,  with $c_1$ the constant from Example \ref{ex:cov}.

We will need a mixed-norm bounded admissible partition of unity adapted to $\alpha$-coverings.

\begin{definition}
Let  $\qp$ be an $\alpha$-covering of $\bR^n$, and let $\vec{p}\in (0,\infty]^n$. A corresponding
mixed-norm bounded admissible partition of unity ($\vec{p}$-BAPU) $\{\psi_Q\}_{Q\in\qp}$ is a family
of functions satisfying
\begin{itemize}
\item $\text{supp}(\psi_Q)\subset Q$
\item $\sum_{Q\in\qp} \psi_Q(\xi)=1$
\item $\sup_{Q\in \qp}|Q|^{-1}\|\,\mathbf{1}_Q\|_{L_{\vec{\tilde{p}}}}\|\mathcal{F}^{-1}\psi_Q\|_{L_{\vec{\tilde{p}}}}<\infty$,
\end{itemize}
where $\tilde{p}_j:=\min\{1,p_1,\ldots,p_j\}$ for $j=1,2,\ldots, n$, $\vec{\tilde{p}}:=(\tilde{p}_1,\ldots,\tilde{p}_n)$, and $\mathbf{1}_Q$ denotes the indicator function of the set $Q$.
\end{definition}

The results in Section
\ref{sec:ms} and the construction of a $\varphi$-transform rely on the known fact that it is possible to construct
a smooth $\vec{p}$-BAPU with certain ``nice'' properties. We summarise the needed properties in the following proposition proved in  \cite{MR4082240}, see also \cite{Borup2006a}. We let $\dS$ denote the Schwartz class of functions defined on $\bR^n$.
\begin{proposition}\label{prop:psideriv}
  For $\alpha\in [0,1)$, there exists an $\alpha$-covering of $\bR^n$
  with a corresponding $\vec{p}$-BAPU $\{\psi_k\}_{k\in\Zd}\subset \dS$ satisfying:
  \begin{itemize}
  \item[i.] $\xi_k\in Q_k$, $k\in\Zd$, with $\xi_k$  given in Eq.\ \eqref{eq:rk}.
  \item[ii.] The following estimate holds, 
  $$|\partial^\beta_\xi \psi_k(\xi)|\leq C_\beta\langle \xi\rangle^{-|\beta|\alpha},\qquad \xi\in\bR^n,$$
  for every multi-index $\beta$ with a constant $C_\beta>0$ independent of $k\in \Zd$.
  \item [iii.]  Define $\widetilde{\psi}_k(\xi) = \psi_k(|\xi_k|^\alpha\xi+\xi_k)$. Then for
every $\beta\in \bN_0^n$ there exists a constant
  $C_\beta>0$, independent of $k\in \Zd$, such that 
$$|\partial^\beta_\xi \widetilde{\psi}_k(\xi)| \leq C_\beta \mathbf{1}_{B(0,r)}(\xi).$$
\item [iv.] Define
  ${\mu}_k(\xi) = \psi_k(a_k\xi)$, where
  $a_k:=\brac{\xi_k}$. Then for every $m\in \bN$ there exists a constant
  $C_m>0$ independent of $k$ such that 
$$|\hat{{\mu}}_k(y)| \leq C_m a_k^{(m-n)(1-\alpha)}\brac{y}^{-m},\qquad y\in\bR^n.$$
  \end{itemize}
\end{proposition}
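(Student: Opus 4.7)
The plan is to construct $\{\psi_k\}$ explicitly from a single radial Schwartz bump subordinate to the $\alpha$-covering of Example~\ref{ex:cov}. Fix $\phi\in\dS$ with $\phi\equiv 1$ on $B(0,c_1)$ and $\supp(\phi)\subset B(0,2c_1)$, and set $\phi_k(\xi):=\phi((\xi-\xi_k)/r_k)$, so that $\supp(\phi_k)\subset Q_k$. The bounded overlap property of the covering ensures that $\Phi(\xi):=\sum_k\phi_k(\xi)$ is locally a finite sum and is bounded above and below by positive constants on $\bR^n$. Setting $\psi_k:=\phi_k/\Phi$ then gives a smooth partition of unity subordinate to $\qp=\{Q_k\}$ with $\psi_k\in\dS$, and (i) holds by construction since $\xi_k$ is the center of $Q_k$.

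For (ii), the quotient and chain rules give $|\partial^\beta\phi_k|\leq C_\beta r_k^{-|\beta|}$, and the bounded-overlap property transfers the same bound to derivatives of $1/\Phi$, so $|\partial^\beta\psi_k(\xi)|\leq C_\beta r_k^{-|\beta|}$. On $\supp(\psi_k)\subset Q_k$ we have $\langle\xi\rangle\asymp\langle\xi_k\rangle$, and $r_k\asymp\langle\xi_k\rangle^\alpha\asymp\langle\xi\rangle^\alpha$ by the defining formula for $r_k$, yielding (ii). For (iii), note that $\widetilde\psi_k$ is supported in $|\xi_k|^{-\alpha}(Q_k-\xi_k)$, a ball of radius $\asymp r_k/|\xi_k|^\alpha\asymp 1$ for $|k|$ large (the finitely many small $|k|$ are handled directly), so the support condition holds uniformly in $k$. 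The chain rule gives $\partial^\beta\widetilde\psi_k(\xi)=|\xi_k|^{|\beta|\alpha}(\partial^\beta\psi_k)(|\xi_k|^\alpha\xi+\xi_k)$, and the prefactor cancels against the decay $\langle\,\cdot\,\rangle^{-|\beta|\alpha}\asymp\langle\xi_k\rangle^{-|\beta|\alpha}$ supplied by (ii), giving the uniform bound.

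For (iv) I use integration by parts. On $\supp(\psi_k)\subset Q_k$ one has $\langle\xi\rangle\asymp a_k$, so by (ii), $|\partial^\beta\psi_k|\leq C a_k^{-|\beta|\alpha}$, and therefore $|\partial^\beta\mu_k(\xi)|=a_k^{|\beta|}|(\partial^\beta\psi_k)(a_k\xi)|\leq C a_k^{|\beta|(1-\alpha)}$ on $a_k^{-1}Q_k$, a set of measure $\asymp a_k^{-n}r_k^n=a_k^{-n(1-\alpha)}$. Hence $\|\partial^\beta\mu_k\|_1\leq C a_k^{(|\beta|-n)(1-\alpha)}$, and integration by parts $|\beta|=m$ times yields $|y|^m|\hat\mu_k(y)|\leq C_m a_k^{(m-n)(1-\alpha)}$; combined with $|\hat\mu_k(y)|\leq\|\mu_k\|_1\leq Ca_k^{-n(1-\alpha)}$ for $|y|\leq 1$, the bracket form of (iv) follows. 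The $\vec p$-BAPU estimate in the definition then reduces via the rescaling $\cF^{-1}\psi_k(y)=a_k^n\cF^{-1}\mu_k(a_k y)$ and the region split $|y|\lessgtr a_k^{-\alpha}\asymp r_k^{-1}$ to the pointwise bound $|\cF^{-1}\psi_k(y)|\leq C_m r_k^n\langle r_k y\rangle^{-m}$, and a change of variable gives $\|\cF^{-1}\psi_k\|_{L_{\vec{\tilde p}}}\asymp r_k^{n-\sum_j 1/\tilde p_j}$, which precisely cancels $|Q_k|^{-1}\|\mathbf 1_{Q_k}\|_{L_{\vec{\tilde p}}}\asymp r_k^{-n+\sum_j 1/\tilde p_j}$.

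The main bookkeeping obstacle is ensuring that the exponent of $a_k$ in (iv) comes out to the precise value $(m-n)(1-\alpha)$, which balances the differentiation factor $a_k^{|\beta|}$, the decay factor $a_k^{-|\beta|\alpha}$ from (ii), and the volume factor $a_k^{-n(1-\alpha)}$ of $\supp(\mu_k)$; the identities $r_k\asymp a_k^\alpha$ and $a_k^{1-\alpha}\asymp\langle k\rangle$ are what make everything line up so that the $\vec p$-BAPU estimate turns out scale-invariant in $k$.
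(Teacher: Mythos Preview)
The paper does not prove this proposition; it simply cites \cite{MR4082240} and \cite{Borup2006a} for the proof. Your explicit construction via $\psi_k=\phi_k/\sum_j\phi_j$ is precisely the standard one (and is in fact the same construction the paper itself sketches immediately after the proposition when building the square-root BAPU $\theta_k^\alpha$), and your verifications of (i)--(iv) and of the $\vec p$-BAPU condition are correct.

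Two small bookkeeping points worth tightening. First, with $\supp(\phi)\subset B(0,2c_1)$ you get $\supp(\phi_k)\subset B(\xi_k,2c_1 r_k)$, which is \emph{not} contained in $Q_k=B(\xi_k,c_1 r_k)$ as you assert; you need to take the covering with the larger radius (Example~\ref{ex:cov} allows any $c_1\ge c_0$, so just replace $c_1$ by $2c_1$ in the definition of $Q_k$). Second, your derivation of the $\vec p$-BAPU estimate by routing through (iv) is unnecessarily roundabout: the bound $|\cF^{-1}\psi_k(y)|\le C_m r_k^{n}\langle r_k y\rangle^{-m}$ follows in one line from the $r_k$-rescaling $g_k(\xi):=\psi_k(r_k\xi+\xi_k)$ (which has uniformly bounded derivatives on a fixed compact set), without passing through the $a_k$-scaling of (iv). The detour is harmless but obscures the point. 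The $k=0$ issue in (iii) (where $|\xi_0|=0$) is, as you say, a single exceptional index and is not a real obstacle.
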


\begin{remark}
A closer inspection of the construction presented in \cite{MR4082240} reveals that the BAPU is in fact $\vec{p}$-independent and only depends on $\alpha$ through the geometry of the $\alpha$-covering.  
\end{remark}

The case $\alpha=1$ corresponds to a dyadic-covering and it is not included in the particular  construction of Proposition \ref{prop:psideriv}, but it is known that $\vec{p}$-BAPU can easily be constructed for dyadic coverings, see  e.g.\ \cite{MR3682609}.  Using $\vec{p}$-BAPUs, it is now possible to introduce the family of mixed-norm $\alpha$-modulation spaces. 

It is instructive to mention one specific simple construction of an $\vec{p}$-BAPU in order to be able to construct compatible tight frames, which will be used as for discrete decompositions.  Let $\varphi\in C^\infty(\bR^n)$ be non-negative with $\varphi(\xi)=1$ when $|\xi|\leq 1$ and $\varphi(\xi)=0$ for $|\xi|> \frac{3}{2}$, and put
$$\phi_k(\xi):=\varphi\left(\frac{\xi-r_k k}{c_1 r_k}\right),\qquad k\in\Zn,$$
where $c_1$ is the constant from Example \ref{ex:cov} and
$r_k$  given in Eq.\ \eqref{eq:rk}.
 We notice that $\phi_k(\xi)=1$ on the sets  $B_k^\alpha$ from Example \ref{ex:cov} forming an $\alpha$-cover, and, moreover, it can be verified that $\{\phi_k\}$ forms a $\vec{p}$-BAPU.
A ''square-root'' of this  $\vec{p}$-BAPU can be obtained by setting
\begin{equation}\label{eq:theta}
\theta_k^\alpha(\xi)=\frac{\phi_k(\xi)}{\sqrt{\sum_{\ell\in\Zn} \phi_\ell^2(\xi)}}    
\end{equation}
We then have
 $$\sum_{\ell\in\Zn} [\theta_k^\alpha(\xi)]^2=1,\qquad \xi\in\Rn.$$
With the notion of a $\vec{p}$-BAPU, we can now define the mixed-norm $\alpha$-modulation spaces.
\begin{definition}\label{def:Malpha}
Let $\alpha\in [0,1], s\in \bR, \vec{p}\in (0,\infty)^n, q\in (0,\infty],$ and 
let $\qp$ be an $\alpha$-covering with associated $\vec{p}$-BAPU 
   $\{\psi_k\}_{k\in\bZ^n\backslash \{0\}}$ of the type considered in Proposition \ref{prop:psideriv}. Then we define the mixed-norm $\alpha$-modulation space, $M^{s,\alpha}_{\vec{p},q}(\bR^n)$ as the set of
  tempered distributions $f\in S^\prime(\bR^n)$ satisfying
  \begin{equation}
    \label{eq:mod}
  \|f\|_{M^{s,\alpha}_{\vec{p},q}} := \biggl( \sum_{k\in\bZ^n}
r_k^{qs}\, \bigl\| \cF^{-1} (\psi_k\cF f)\bigr\|_{\Lp}^q
\biggr)^{1/q}<\infty,  
  \end{equation}
 with $r_k$  given in Eq.\ \eqref{eq:rk}.
For $q=\infty$, we
change of the sum to $\sup_{k\in\bZ^n}$.
\end{definition}

It is proved in \cite{MR4082240} that  the definition of 
$M^{s,\alpha}_{\vec{p},q}(\bR^n)$ is independent of the $\alpha$-covering and
of the particular BAPU. This important fact will allow us to build a matrix-algebra relative to the one specific $\alpha$-covering given in Example \ref{ex:cov}, and then extend to other coverings and BAPUs.  See also \cite{MR87b:46020} for a discussion of this matter in the case of general decomposition spaces. 
\section{Tight frames and the $\varphi$-transform} \label{sec:ms}

In this section we define discrete mixed-norm $\alpha$-modulation space together with a simple construction of adapted tight frames that will support a $\phi$-transform in the spirit of the classical construction by Frazier and Jawerth \cite{Frazier1985,Frazier1990}.

Let $k,\ell\in \Zn$, and let   $r_k$ be as in Eq.\ \eqref{eq:rk}. We define for a constant $a>\pi\sqrt{n}/2$, the sets
\begin{equation}\label{eq:qk}
Q(k,\ell):=
B\bigg(-\frac{\pi}{a} r_k^{-1}\ell, r_k^{-1}\bigg).
\end{equation}
It is easy to verify that there exists $0<L<\infty$ such that
$$\sum_{\ell\in\Zn} \mathbf{1}_{Q(k,\ell)}(x)\leq L,\qquad x\in\bR^n,k\in\Zn,$$
with $\mathbf{1}_A$ denoting the characteristic function of the set $A$.
The discrete mixed-norm $\alpha$-modulation space $m_{\vec{p},q}^{s,\alpha}$ is then defined as follows, see also \cite{MR4082240},
\begin{definition}
Let $\alpha\in [0,1)$, $s\in\bR$, $\vec{p}\in (0,\infty]^n$, and $q\in (0,\infty]$.
The discrete mixed-norm $\alpha$-modulation space $m_{\vec{p},q}^{s,\alpha}:=m_{\vec{p},q}^{s,\alpha}(\Zn\times\Zn)$ is defined a the set of all complex-valued sequences  $b=\{b_{j,m}\}_{u,m\in\Zn}$ such that
$$\|b\|_{m_{\vec{p},q}^{s,\alpha}}:=
\bigg(\sum_{j\in\Zn}\bigg\|\sum_{m\in\Zn} r_j^{s+\frac{ n}{2}}|b_{j,m}|\,\mathbf{1}_{Q(j,m)}\bigg\|_{\vec{p}}^q\bigg)^{1/q}<+\infty,$$
with $r_j$ given in Eq.\ \eqref{eq:rk}.
\end{definition}

Let us consider a simple tight frame with time-frequency properties adapted to the mixed-norm $\alpha$-modulation space $M_{\vec{p},q}^{s,\alpha}$. We mention that this approach to frame constructions has been considered in several contexts before, see, e.g., \cite{MR2035296} and references therein. 

Let $$Q_k:=Q_k^\alpha:=R[r_k k,ar_k],\qquad k\in\Zn,$$
be an $\alpha$-cover of cubes with $r_k$ defined in \eqref{eq:rk} with $a\geq \max\{2c_1,\pi\sqrt{n}/2\}$, and where $c_1$ is the constant from Example \ref{ex:cov}. Consider the localized trigonometric system given by 
$$e_{k,\ell}(\xi):=(2\pi)^{-n/2}r_k^{-n/2}\mathbf{1}_{Q_0}(r_k^{-1}\xi-k)e^{i\frac{\pi}{a}\ell\cdot (r_k^{-1}\xi-k)},\qquad k,\ell\in\Zn.$$
Then we define the system $\Phi:=\{\varphi_{k,\ell}\}_{k,\ell}$ in the Fourier domain by
\begin{equation}\label{eq:TF}
\hat{\varphi}_{k,\ell}(\xi)=\theta_k^\alpha(\xi)e_{k,\ell}(\xi),
\end{equation}
where $\{\theta_k^\alpha\}_k$ is the system given by Eq.\ \eqref{eq:theta}.
One can verify that
\begin{equation}\label{eq:muk}
\varphi_{k,\ell}(x)=(2a)^{-n/2}r_k^{n/2}e^{i r_k k\cdot x}\mu_k\bigg(\frac{\pi}{a}\ell+r_k x\bigg),   
\end{equation}
with the functions $\{\mu_k\}$ uniformly well-localised in the sense that for every $N\in\bN$ there exists $C_N<\infty$, independent of $k$, such that
$$|\mu_k(x)|\leq C_N\big(1+r_k|x|\big)^{-N},\qquad x\in\Rn.$$
This in turn implies that
\begin{equation}\label{eq:e1}
    |\varphi_{k,\ell}(x)|\leq C_N (2a)^{-n/2}r_k^{n/2}\big(1+r_k\big|x-x_{k,\ell}\big|\big)^{-N},\qquad x\in\Rn,
\end{equation}
with $x_{k,\ell}:=\frac{\pi}{a}r_k^{-1}\ell$.
In the frequency domain, we have $\text{supp}(\hat{\varphi}_{k,\ell})\subset B(\xi_k,cr_k)$ for some $c>0$ independent of $k$, which clearly implies that there exist constants $K_N$ such that the localisation
\begin{equation}\label{eq:e2}
   |\hat{\varphi}_{k,\ell}(\xi)|\leq K_N 
   r_k^{-\frac{n}{2}}(1+r_k^{-1}|\xi_{k}-\xi|)^{-N}, \qquad \xi\in\bR^n,
\end{equation}
holds true for $N\in\bN$. It can be verified, see \cite{Borup2007}, that $\Phi:=\{\varphi_{k,\ell}\}_{k,\ell}$ forms a tight frame for $L^2(\Rn)$, i.e., we have the identity
\begin{equation}\label{eq:tf}
f=\sum_{k,\ell} \langle f,\varphi_{k,\ell}\rangle \varphi_{k,\ell},\qquad f\in L^2(\Rn),
\end{equation}
where the sum converges unconditionally in $L^2(\Rn)$.

Let $\bC^{\Zn\times\Zn}$ denote the collection of all complex-valued sequences defined on $\Zn\times\Zn$. For $\beta\in\bR$ we define $\mathcal{C}_\beta$ to be the collection of $s\in \bC^{\Zn\times\Zn}$ satisfying
$$\|b\|_{\mathcal{C_\beta}}:=\sup_{k,\ell\in\Zn} \langle{k}\rangle^{-\beta}|b_{k,\ell}|<\infty,$$
and put $\mathcal{C}:=\bigcup_{\beta\in \bR} \mathcal{C}_\beta$.

The $\varphi$-transform, $S_\varphi$, is the operator defined by
$$S_\varphi:\mathcal{S}'\rightarrow \bC^{\Zn\times\Zn},\, f\mapsto S_\varphi f:=\{\langle f,\varphi_{k,\ell}\rangle\}_{k,\ell\in\Zn}$$

The inverse $\varphi$-transform $T_\phi$ is defined by
$$T_\varphi:\mathcal{C}\rightarrow \mathcal{S}' ,\, b\mapsto T_\varphi b:=\sum_{k,\ell} b_{k,\ell} \varphi_{k,\ell}.$$
It can be shown that $T_\varphi$ is in fact well-defined within the current setting,  we refer to  \cite{MR4082240} for further details. The following fundamental result on the $\varphi$-transform  is also proven in  \cite{MR4082240}. 
\begin{theorem}\label{th:phi}
Let $\alpha\in [0,1)$, $s\in\bR$, $\vec{p}\in(0,\infty)^n$, and $q\in (0,\infty)$. Then 
\begin{itemize}
    \item The $\varphi$-transform $S_\varphi$ is bounded from  $M^{s,\alpha}_{\vec{p},q}$ to $m_{\vec{p},q}^{s,\alpha}$
    \item The inverse $\varphi$-transform $T_\varphi$ is bounded from $m_{\vec{p},q}^{s,\alpha}$ to $M^{s,\alpha}_{\vec{p},q}$
    \end{itemize}
    with $T_\varphi\circ S_\varphi=\text{id}_{M^{s,\alpha}_{\vec{p},q}}$.
\end{theorem}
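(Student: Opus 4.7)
The plan is to establish the three claims separately, using the frequency-localization of both the BAPU and the tight frame elements together with the Peetre-type pointwise estimate \eqref{M3} and the mixed-norm maximal inequality \eqref{max}.

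For the boundedness of $S_\varphi$, I would exploit that by \eqref{eq:e2}, $\hat{\varphi}_{k,\ell}$ is supported in $B(\xi_k,cr_k)$, so distributional pairing gives $\langle f,\varphi_{k,\ell}\rangle=\langle \tilde{f}_k,\varphi_{k,\ell}\rangle$ where $\tilde{f}_k:=\sum_{j\in N(k)}\mathcal{F}^{-1}(\psi_j\hat{f})$ and $N(k):=\{j:Q_j\cap B(\xi_k,cr_k)\neq\emptyset\}$ is uniformly finite. Since $\hat{\tilde{f}}_k$ sits in a cube of sidelength $\asymp r_k$, Peetre's estimate \eqref{M3} gives $|\tilde{f}_k(y)|\lesssim \langle r_k(x-y)\rangle^{n/\theta}\mathcal{M}_\theta\tilde{f}_k(x)$ for any $x$. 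Combining this with the decay \eqref{eq:e1} of $\varphi_{k,\ell}$ and choosing $x\in Q(k,\ell)$ (so $|x-x_{k,\ell}|\lesssim r_k^{-1}$, and hence $1+r_k|y-x_{k,\ell}|\asymp 1+r_k|y-x|$), a single integral in $y$ with $N>n/\theta+n$ yields
$$|\langle f,\varphi_{k,\ell}\rangle|\lesssim r_k^{-n/2}\,\mathcal{M}_\theta\tilde{f}_k(x),\qquad x\in Q(k,\ell).$$
Summing over $\ell$ (bounded overlap of $\{Q(k,\ell)\}_\ell$), taking the $L_{\vec{p}}$-norm and applying \eqref{max} with $\theta<\min_ip_i$, then taking $\ell^q$ in $k$ and using the finite overlap of $\{N(k)\}_k$ (with \eqref{suba} if $q<1$), gives $\|S_\varphi f\|_{m^{s,\alpha}_{\vec{p},q}}\lesssim \|f\|_{M^{s,\alpha}_{\vec{p},q}}$.

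For the boundedness of $T_\varphi$, I would establish the dual pointwise estimate
$$\bigg|\sum_\ell b_{k,\ell}\varphi_{k,\ell}(x)\bigg|\lesssim r_k^{n/2}\,\mathcal{M}_\theta\bigg(\sum_\ell |b_{k,\ell}|\,\mathbf{1}_{Q(k,\ell)}\bigg)(x).$$
The idea is to use \eqref{eq:e1} and split $\sum_\ell$ into dyadic annuli $A_j:=\{\ell:2^j\le 1+r_k|x-x_{k,\ell}|<2^{j+1}\}$, each containing $\lesssim 2^{jn}$ indices. Bounded overlap of the $Q(k,\ell)$ combined with $|Q(k,\ell)|\asymp r_k^{-n}$ turns $\int_{B(x,C2^jr_k^{-1})}(\sum_\ell |b_{k,\ell}|\mathbf{1}_{Q(k,\ell)})^\theta$ into an upper bound for $r_k^{-n}\sum_{\ell\in A_j}|b_{k,\ell}|^\theta$, so that with the power-mean inequality $\sum_{\ell\in A_j}|b_{k,\ell}|\le (\sum_{\ell\in A_j}|b_{k,\ell}|^\theta)^{1/\theta}$ (valid for $\theta\le 1$), the dyadic sum converges whenever $N>n/\theta$. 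Next, because $\hat{\varphi}_{k,\ell}$ is supported near $\xi_k$, the BAPU piece $\mathcal{F}^{-1}(\psi_j\mathcal{F}T_\varphi b)$ only receives contributions from $k\in N(j)$; writing it as the convolution $\mathcal{F}^{-1}\psi_j*\sum_\ell b_{k,\ell}\varphi_{k,\ell}$ (whose $L_{\vec{p}}$-norm is controlled by $\|\sum_\ell b_{k,\ell}\varphi_{k,\ell}\|_{\vec{p}}$ using the decay of $\mathcal{F}^{-1}\psi_j$ from Proposition \ref{prop:psideriv}.iv), the maximal inequality \eqref{max}, finite overlap and \eqref{suba} complete the proof.

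The identity $T_\varphi\circ S_\varphi=\mathrm{id}$ on $M^{s,\alpha}_{\vec{p},q}$ I would obtain by first noting that on the dense subspace $\mathcal{S}(\bR^n)\cap M^{s,\alpha}_{\vec{p},q}$ (or distributions with compactly supported Fourier transform) the reproducing formula \eqref{eq:tf} holds in $L^2$, and then extending the identity by continuity using the two boundedness statements just proved (the $\varphi$-transform BAPU-independence noted after Definition \ref{def:Malpha} also gives the freedom to identify limits in the appropriate topology). The main technical obstacle is the synthesis-side pointwise estimate: the dyadic annuli argument requires careful tracking of the $\theta$-parameter so that both the Peetre estimate and the maximal inequality are simultaneously valid, and the small mismatch between $\psi_j$ and the frequency support of $\hat{\varphi}_{k,\ell}$ (for $k\ne j$ in $N(j)$) must be absorbed into uniform constants through the decay bounds of Proposition \ref{prop:psideriv}, which is where the delicate normalization of the BAPU enters.
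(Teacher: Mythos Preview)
The paper does not actually prove Theorem~\ref{th:phi}; it is stated and attributed to Cleanthous and Georgiadis \cite{MR4082240}, so there is no in-paper proof to compare against. Your sketch follows the standard Frazier--Jawerth template adapted to the mixed-norm $\alpha$-setting and is essentially the argument given in \cite{MR4082240}: frequency localization reduces $\langle f,\varphi_{k,\ell}\rangle$ to a finite sum of BAPU pieces, the Peetre estimate \eqref{M3} converts pointwise values to maximal functions, and the synthesis side is handled by the dyadic-annuli bound (which is exactly Lemma~\ref{le:max} of the present paper, stated there for a different purpose).

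Two places in your sketch deserve a little more care. First, in the $T_\varphi$ step you pass from $\sum_\ell b_{k,\ell}\varphi_{k,\ell}$ to $\mathcal{F}^{-1}(\psi_j\mathcal{F}T_\varphi b)$ by a convolution with $\mathcal{F}^{-1}\psi_j$; in the mixed-norm quasi-Banach range this is not automatic, and one needs either the $\vec{p}$-BAPU normalization condition $\sup_Q |Q|^{-1}\|\mathbf{1}_Q\|_{\vec{\tilde p}}\|\mathcal{F}^{-1}\psi_Q\|_{\vec{\tilde p}}<\infty$ together with a Nikolskii-type inequality, or---cleaner---a second application of \eqref{M3} to the bandlimited function $g_k:=\sum_\ell b_{k,\ell}\varphi_{k,\ell}$, which gives $|\mathcal{F}^{-1}\psi_j*g_k(x)|\lesssim \mathcal{M}_\theta g_k(x)$ directly from the decay in Proposition~\ref{prop:psideriv}.iv. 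Second, for the identity $T_\varphi\circ S_\varphi=\mathrm{id}$ your density argument is correct in spirit, but note that $M^{s,\alpha}_{\vec{p},q}$ is only a quasi-Banach space in general, so the ``extension by continuity'' should be phrased via the $r$-subadditivity \eqref{suba}; alternatively one can verify the identity distributionally for all $f\in M^{s,\alpha}_{\vec{p},q}$ by testing against Schwartz functions, which is how \cite{MR4082240} proceeds. Neither point is a genuine gap, but both would need to be spelled out in a full proof.
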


\section{Well-localized molecules and change of frame operators}\label{sec:alm}
One potential issue with the $\phi$-transform defined in Section \ref{sec:ms} is that the definition relies on one fixed reference frame $\Phi$. It could be that the construction only works with this type of calibration and, e.g., the construction is somehow unstable under small perturbations or fails for frames build with less smoothness. One of the main contributions of the present paper is to prove such concerns unfounded.

We will now focus on the time-frequency localization  specified by equations \eqref{eq:e1} and \eqref{eq:e2} rather than other  specific properties of the system $\Phi$. The estimates will be used in the sequel to define a general notion of  well-localised "molecules" for the transforms considered in this paper.

\begin{definition}\label{def:mole}
Let $\Psi:=\{\psi_{k,\ell}\}_{k,\ell\in\Zn}\subset L^2(\bR^n)$ be a family of complex-valued functions defined on $\bR^n$. We call $\Psi$ a family of $(M,N)$-molecules provided there exist finite constants $C_M, K_N$ such that uniformly in $k$ and $\ell$,
\begin{equation}\label{eq:ee1}
    |\psi_{k,\ell}(x)|\leq C_M (2a)^{-n/2}r_k^{n/2}\big(1+r_k\big|x-x_{k,\ell}\big|\big)^{-M},\qquad x\in\Rn,
\end{equation}
and
\begin{equation}\label{eq:ee2}
   |\hat{\psi}_{k,\ell}(\xi)|\leq K_N 
   r_k^{-\frac{n}{2}}(1+r_k^{-1}|\xi_{k}-\xi|)^{-N}, \qquad \xi\in\bR^n,
\end{equation}
with $r_k$, $\xi_k$, and $x_{k,\ell}$ given in Eq.\ \eqref{eq:rk}.
\end{definition}
Let us first prove that ''change-of-frame'' matrices between two families of $(M,N)$-molecules behave in a suitable sense as almost diagonal matrices, where we first make a few essential observations about the weight $\brac{\cdot}^\alpha$. 

It is a  well-known fact that there exists $C\geq 1$ such that $\brac{\xi+\zeta}\leq C\brac{\xi}\brac{\zeta}$, $\xi,\zeta\in\bR^n$. This estimate leads directly to the following observations about the weight $h_\alpha(\cdot):=\brac{\cdot}^\alpha$, $\alpha\in [0,1)$.
There exists $\beta,R_0,R_1,\rho_0,\rho_1>0$ such that $h_\alpha^{1+\beta}$ is moderate in the sense that
$h_\alpha^{1+\beta}(\cdot)\leq C\brac{\cdot}$, and
\begin{equation}\label{eq:b}
|\xi-\zeta|\leq \rho_0h_\alpha^{1+\beta}(\xi) \text{ implies that }R_0^{-1}\leq h_\alpha^{1+\beta}(\zeta)/h_\alpha^{1+\beta}(\xi)\leq R_0.
    \end{equation}

Also, we have
\begin{equation}\label{strawberry}
|\xi-\zeta| \le a h_\alpha(\xi) \text{ for }a\ge \rho_1 \text{ implies }h_\alpha(\zeta)\le R_1 a h_\alpha(\xi).
\end{equation}
Basen on these observations, we can now derive the following general matrix estimate.
\begin{lemma}\label{bubbleboy2}%
Let $\alpha\in [0,1)$ and let $\beta:=\beta(\alpha)>0$ satisfy \eqref{eq:b}. 
Choose $N,M,L>0$ such that $2N>n$ and $2M+2\tfrac{L}{\beta}>n$. If $\{\eta_{k,\ell}\}_{k,\ell\in\Zn}$ is a family of $\big(2N,2M+2\frac{L}{\beta}\big)$-molecules, and and $\{\psi_{j,m}\}_{j,m\in\Zn}$ is another family of $\big(2N,2M+2\frac{L}{\beta}\big)$-molecules, both in the sense of Definition \ref{def:mole}, then we have
\begin{align*}
|\langle\eta_{k,\ell},\psi_{j,m}\rangle|
\le& C
\min\bigg(\frac{r_k}{r_j},\frac{r_j}{r_k}\bigg)^{\frac{n}{2}+L}(1+\max(r_k,r_j)^{-1}|\xi_k-\xi_j|)^{-M}\\
&\phantom{Cspace} \times(1+\min(r_k,r_j)|x_{k,\ell}-x_{j,m}|)^{-N},
\end{align*}
with $r_k$, $\xi_k$, and $x_{n,\ell}$ defined in Eq.\ \eqref{eq:rk}.
\end{lemma}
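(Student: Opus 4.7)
The plan is to reduce to the regime $r_j\le r_k$ by a complex-conjugation symmetry, then establish two separate upper bounds on $|\langle\eta_{k,\ell},\psi_{j,m}\rangle|$---a position-space bound from direct integration using \eqref{eq:ee1} and a frequency-space bound from Parseval combined with \eqref{eq:ee2}---and finally combine them into the claimed three-factor product using a weighted Cauchy--Schwarz argument, in which the doubled exponents $2N$ and $2M+2L/\beta$ in the hypothesis are precisely what is needed.

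Under the assumption $r_j\le r_k$, the position-space estimate proceeds as follows. Inserting \eqref{eq:ee1} into $|\langle\eta_{k,\ell},\psi_{j,m}\rangle|\le\int|\eta_{k,\ell}(x)||\psi_{j,m}(x)|\,dx$, I split the exponent $2N=N+N$. The monotonicity $(1+r_k|x-x_{k,\ell}|)^{-N}\le(1+r_j|x-x_{k,\ell}|)^{-N}$ combined with the elementary ``almost orthogonality'' inequality $(1+r_j|x-x_{k,\ell}|)(1+r_j|x-x_{j,m}|)\ge 1+r_j|x_{k,\ell}-x_{j,m}|$ extracts the spatial factor $(1+r_j|x_{k,\ell}-x_{j,m}|)^{-N}$, and the remaining $N$-power of $(1+r_k|x-x_{k,\ell}|)^{-N}$ provides $L^{1}$-integrability on $\bR^n$. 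The amplitude factor $r_k^{n/2}r_j^{n/2}$ from \eqref{eq:ee1} combined with the scale $r_k^{-n}$ from the integration yields the prefactor $(r_j/r_k)^{n/2}$, and we obtain
\[
|\langle\eta_{k,\ell},\psi_{j,m}\rangle|\le C(r_j/r_k)^{n/2}\bigl(1+r_j|x_{k,\ell}-x_{j,m}|\bigr)^{-N}.
\]

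On the frequency side, Parseval converts the inner product into $\int\hat\eta_{k,\ell}(\xi)\overline{\hat\psi_{j,m}(\xi)}\,d\xi$, and the analogous orthogonality trick---now using $r_j^{-1}\ge r_k^{-1}$ as the key comparison---gives an intermediate bound of the form $C(r_j/r_k)^{n/2}(1+r_k^{-1}|\xi_k-\xi_j|)^{-(M+L/\beta)}$. The extra $L/\beta$-power of frequency decay is then converted into the small-ratio scale factor $(r_j/r_k)^L$ by a case analysis based on \eqref{eq:b}: if $r_j/r_k$ is bounded below by a fixed constant (depending on $R_0,\beta$), then $(r_j/r_k)^L\asymp 1$ and the trade is trivial; otherwise the contrapositive of \eqref{eq:b} forces $|\xi_k-\xi_j|>\rho_0\,r_k^{1+\beta}$, so $(1+r_k^{-1}|\xi_k-\xi_j|)^{-L/\beta}\le Cr_k^{-L}\le C(r_j/r_k)^L$, where the final inequality uses $r_j\ge 1$ for every $j\in\Zn$. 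Hence
\[
|\langle\eta_{k,\ell},\psi_{j,m}\rangle|\le C(r_j/r_k)^{n/2+L}\bigl(1+r_k^{-1}|\xi_k-\xi_j|\bigr)^{-M}.
\]

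To assemble the full product estimate claimed in the lemma, I would re-cast the two derivations above as weighted Cauchy--Schwarz estimates on a single integral---position-space weighted to produce the $(1+r_j|x_{k,\ell}-x_{j,m}|)^{-N}$ factor, frequency-space weighted to produce $(1+r_k^{-1}|\xi_k-\xi_j|)^{-M}$ and the scale factor---so that the three decay factors appear multiplicatively rather than alternatively. This is why the molecule hypothesis doubles the exponents in \eqref{eq:ee1}--\eqref{eq:ee2}: the Cauchy--Schwarz takes a square root, and the doubled inputs yield the stated exponents $N$ and $M$ in the output. The main obstacle is the bookkeeping in the $\beta$-exchange step: the available frequency exponent $2M+2L/\beta$ must be split carefully into a portion $M$ reserved for frequency-orthogonality, a portion $L/\beta$ converted into the scale factor via \eqref{eq:b}, and a remainder that exceeds $n/2$ so as to keep the Cauchy--Schwarz weighted integrals finite; the threshold $2M+2L/\beta>n$ is the sharp condition under which this allocation succeeds, and the analogous condition $2N>n$ plays the same role on the position side.
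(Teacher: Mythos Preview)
Your overall architecture is right and matches the paper: a position-space bound from \eqref{eq:ee1}, a frequency-space bound from \eqref{eq:ee2} via Parseval, the trade of excess frequency decay for the scale factor $(r_j/r_k)^{L}$ via the dichotomy based on \eqref{eq:b}, and then a square-root combination. The $\beta$-exchange paragraph is essentially correct. But the execution of the two separate bounds loses a factor $2$ in every exponent, and that loss is fatal at the combination step.

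Concretely, your position-space argument splits $2N=N+N$, spending $N$ on orthogonality and the remaining $N$ on $L^1$-integrability. This produces only $(1+r_j|x_{k,\ell}-x_{j,m}|)^{-N}$, not $(1+r_j|x_{k,\ell}-x_{j,m}|)^{-2N}$; moreover, the integrability step needs $N>n$, whereas the hypothesis only gives $2N>n$. The analogous frequency-side computation likewise yields exponent $-M$ and scale factor $(r_j/r_k)^{L}$ rather than $-2M$ and $(r_j/r_k)^{2L}$. Now the only way to pass from two upper bounds $|I|\le A$ (position) and $|I|\le B$ (frequency) to a product bound is $|I|\le\sqrt{AB}$; there is no ``weighted Cauchy--Schwarz on a single integral'' that merges an $x$-space and a $\xi$-space estimate multiplicatively. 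With your $A,B$ the geometric mean returns exponents $-N/2$, $-M/2$ and scale $\tfrac{n}{2}+\tfrac{L}{2}$, which is half of what is claimed.

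The paper closes this gap with a sharper convolution lemma (Lemma~\ref{lastcall1}): a case analysis on whether $\min(r_k,r_j)|x_{k,\ell}-x_{j,m}|\le 1$ and on which center $x$ is closer to shows that if both systems satisfy \eqref{eq:ee1} with exponent $2N$ and $2N>n$, then
\[
|\langle\eta_{k,\ell},\psi_{j,m}\rangle|\le C\min\bigl(\tfrac{r_k}{r_j},\tfrac{r_j}{r_k}\bigr)^{n/2}\bigl(1+\min(r_k,r_j)|x_{k,\ell}-x_{j,m}|\bigr)^{-2N},
\]
i.e.\ the \emph{full} exponent $2N$ survives without any splitting. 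The same lemma applied on the Fourier side gives exponent $-(2M+2L/\beta)$, and then your $\beta$-exchange converts $2L/\beta$ of it into $(r_j/r_k)^{2L}$. The combination is then not Cauchy--Schwarz but the trivial identity $|\langle\eta,\psi\rangle|=|\langle\eta,\psi\rangle|^{1/2}|\langle\hat\eta,\hat\psi\rangle|^{1/2}$ coming from Parseval; the square root halves $2N,2M,2L$ to the stated $N,M,L$.
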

\begin{proof}
From Lemma \ref{lastcall1} we have
\begin{align}\label{west}
|\langle\eta_{k,\ell},\psi_{j,m}\rangle|\le C
\min\bigg(\frac{r_k}{r_j},\frac{r_j}{r_k}\bigg)^{\frac{n}{2}}(1+\min(r_k,r_j)|x_{k,\ell}-x_{j,m}|)^{-2N}.
\end{align}
Using Lemma \ref{lastcall1} for
$\langle\hat{\eta}_{k,\ell},\hat{\psi}_{j,m}\rangle$ gives
\begin{equation}\label{potens}
|\langle\hat{\eta}_{k,\ell},\hat{\psi}_{j,m}\rangle|\le C
\min\bigg(\frac{r_k}{r_j},\frac{r_j}{r_k}\bigg)^{\frac{n}{2}}
(1+\max(r_k,r_j)^{-1}|\xi_k-\xi_j|)^{-2M-2\frac{L}{\beta}}.
\end{equation}
Next we raise the power of
the first term in \eqref{potens} at the expense of the second term.
Without loss of generality assume that $r_k\le r_j$. We first consider the
case $|\xi_k-\xi_j|\le \rho_0 r_j^{1+\beta}$, and use that
$h^{1+\beta}$ is moderate, see Eq.\ \eqref{eq:b}, to get
\begin{equation*}
\frac{1}{1+r_j^{-1}|\xi_k-\xi_j|}\le 1\le
R_0^{\frac{\beta}{1+\beta}}\bigg(\frac{r_k}{r_j}\bigg)^\beta.
\end{equation*}
In the other case, $|\xi_k-\xi_j|> \rho_0 r_j^{1+\beta}$, and it
follows by using the fact that  $r_k\ge \varepsilon_0$ for some $\varepsilon_0>0$, that
\begin{align*}
\frac{1}{1+r_j^{-1}|\xi_k-\xi_j|}\le
\frac{1}{\rho_0\varepsilon_0^\beta} \bigg(\frac{r_k}{r_j}\bigg)^\beta.
\end{align*}
Hence we have
\begin{align}\label{potens2}
|\langle\hat{\eta}_{k,\ell},\hat{\psi}_{j,m}\rangle|\le C
\min\bigg(\frac{r_k}{r_j},\frac{r_j}{r_k}\bigg)^{\frac{n}{2}+2L}
(1+\max(r_k,r_j)^{-1}|\xi_k-\xi_j|)^{-2M}.
\end{align}
The lemma follows by combining \eqref{west} and \eqref{potens2}, and using
\begin{equation*}
|\langle \eta_{k,\ell},\psi_{j,m}\rangle|=|\langle \eta_{k,\ell},\psi_{j,m}\rangle|^{\frac{1}{2}}|\langle \hat{\eta}_{k,\ell},\hat{\psi}_{j,m}\rangle|^{\frac{1}{2}}.
\end{equation*}
\end{proof}

Lemma \ref{bubbleboy2} provides a fundamental matrix estimate for well-localized molecules, but the result is only really useful if one can lift the discrete estimates to the continuous setting to obtain estimate for the mixed-norm $\alpha$-modulation spaces. We address this issue in the following section, where we also introduce a suitable algebra of almost diagonal matrices that is compatible with the estimate obtained in  Lemma \ref{bubbleboy2}.

\section{Matrix algebra of almost diagonal matrices}\label{sec:alge}
Inspired by the estimate in Lemma \ref{bubbleboy2}, we now define the following class of almost diagonal matrices.
\begin{definition}\label{doitmad}%
Assume that $s\in \bR$, $0<q< \infty$, and $\vec{p}\in (0,\infty)^n$. Let $$J:=\frac{n}{\min(1,q,p_1,p_2,\ldots,p_n)}.$$ A matrix $\mathbf{A}:=\{a_{(j,m)(k,n)}\}_{j,m,k,\ell\in\bZ^d}$
is called almost diagonal on $m^{s,\alpha}_{\vec{p},q}$ if
there exists $C, \delta>0$ such that
\begin{align*}
|a_{(j,m)(k,n)}|
&\le C \omega_{(j,m)(k,n)}, \qquad j,m,k,n\in \Zn,
\end{align*}
where
\begin{align*}
\omega_{(j,m)(k,n)}:=\bigg(\frac{r_k}{r_j}\bigg)^{s+\frac{n}{2}}
\min\bigg(\bigg(\frac{r_j}{r_k}\bigg)^{J+\frac{\delta}{2}},&\bigg(\frac{r_k}{r_j}\bigg)^{\frac{\delta}{2}}\bigg) c_{jk}^\delta\\
&\phantom{C}\times
(1+\min(r_k,r_j)|x_{k,n}-x_{j,m}|)^{-J-\delta},
\end{align*}
with
\begin{equation*}
c_{jk}^\delta:=\min\bigg(\bigg(\frac{r_j}{r_k}\bigg)^{J+\delta},\bigg(\frac{r_k}{r_j}\bigg)^{\delta}\bigg)
(1+\max(r_k,r_j)^{-1}|\xi_k-\xi_j|)^{-J-\delta}
\end{equation*}
with  $r_k$, $\xi_k$, and $x_{k,n}$ defined in Eq.\ \eqref{eq:rk}.
We denote the set of almost diagonal matrices on $m^{s,\alpha}_{\vec{p},q}$  by $\textrm{ad}_{\vec{p},q}^s$.
\end{definition}

\begin{remark}
Notice that $\textrm{ad}_{\vec{p},q}^s$ depends (in a somewhat weak sense) on the mixed-norm through the parameter $J$. 
\end{remark}

The matrix class considered in Definition \ref{doitmad} is an adaptation of a general matrix class for decomposition spaces considered by Rasmussen and the author in \cite{Nielsen2012}. In fact, it is proven in \cite{Nielsen2012} that such classes are closed under composition. To state the composition result more precisely, we introduce the quantities
\begin{align}
w^{s,\delta}_{(j,m)(k,n)}:=&\bigg(\frac{r_k}{r_j}\bigg)^{s+\frac{n}{2}}\min\bigg(\bigg(\frac{r_j}{r_k}\bigg)^{J+\frac{\delta}{2}},
\bigg(\frac{r_k}{r_j}\bigg)^{\frac{\delta}{2}}\bigg)c_{jk}^\delta \notag \\
& \phantom{C}\times
(1+\min(r_k,r_j)|x_{k,n}-x_{j,m}|)^{-J-\delta},\notag
\end{align}
where we have used the notation from Definition \ref{doitmad}. We then have
\begin{proposition}\label{poison}%
Let $s\in \bR$, $0<r\le 1$ and $\delta>0$. There exists $C>0$ such that
\begin{align*}
\sum_{i,l\in \Zn}
w^{s,\delta}_{(j,m)(i,l)}w^{s,\delta}_{(i,l)(k,n)}\le C
w^{s,\delta/2}_{(j,m)(k,n)}.
\end{align*}
\end{proposition}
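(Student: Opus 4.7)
The plan is to reduce the composition estimate to two essentially separate estimates---one for the ``spatial'' factor $(1+\min(r_i,r_j)|x_{j,m}-x_{i,\ell}|)^{-J-\delta}$ and one for the ``frequency'' factor $c^\delta$---and then combine them while carefully tracking the ratio factors $(r_k/r_j)^{s+n/2}$ and the two $\min(\cdot,\cdot)$ terms. The loss from $\delta$ to $\delta/2$ in the conclusion suggests that each of the two estimates will consume $\delta/2$ of the available decay.

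\textbf{Step 1: Spatial summation for fixed $i$.} I first estimate, for fixed $j,k,m,n,i$,
\begin{equation*}
T_i^{\mathrm{sp}} := \sum_{\ell\in\Zn} (1+\min(r_i,r_j)|x_{j,m}-x_{i,\ell}|)^{-J-\delta}(1+\min(r_k,r_i)|x_{i,\ell}-x_{k,n}|)^{-J-\delta}.
\end{equation*}
Since $x_{i,\ell}=\tfrac{\pi}{a}r_i^{-1}\ell$ runs through a lattice of spacing $\asymp r_i^{-1}$, this discrete sum is comparable to a convolution of two polynomially decaying functions and, using $J\ge n$, can be bounded by $A(r_i,r_j,r_k)(1+\min(r_j,r_k)|x_{j,m}-x_{k,n}|)^{-J-\delta/2}$, where the ratio prefactor $A$ accounts for the discrepancy between the lattice spacing $r_i^{-1}$ and the weight parameters $\min(r_i,r_j)$, $\min(r_i,r_k)$. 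The reduction from $-J-\delta$ to $-J-\delta/2$ here is exactly what produces one half of the admissible loss in $\delta$.

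\textbf{Step 2: Frequency summation over $i$.} After Step 1 the outstanding sum over $i\in\Zn$ involves the frequency weights $(1+\max(r_i,r_j)^{-1}|\xi_i-\xi_j|)^{-J-\delta}$ and $(1+\max(r_k,r_i)^{-1}|\xi_i-\xi_k|)^{-J-\delta}$ together with the $\min$-ratio factors coming from $w^{s,\delta}_{(j,m)(i,\ell)}w^{s,\delta}_{(i,\ell)(k,n)}$ and the prefactor $A(r_i,r_j,r_k)$ from Step 1. The approach is to split the sum over $i$ according to the size of $r_i$:
\begin{itemize}
\item[(a)] $r_i\le\min(r_j,r_k)$,
\item[(b)] $\min(r_j,r_k)\le r_i\le\max(r_j,r_k)$,
\item[(c)] $r_i\ge\max(r_j,r_k)$.
\end{itemize}
In each region every $\min$ simplifies and the ratio factors collapse to explicit powers of $r_i/r_j$ and $r_i/r_k$; the sum over those $i$ with $\xi_i$ in a fixed ``distance shell'' is controlled by the cardinality properties of the $\alpha$-covering from Example \ref{ex:cov} (using the quantitative control \eqref{eq:b}--\eqref{strawberry} to compare $r_i$ with $r_j$ or $r_k$ when $\xi_i$ is forced to be close to one of them). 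Summing the resulting geometric series in the ratios and then performing a convolution-type sum in the $\xi$-variable produces the bound $w^{s,\delta/2}_{(j,m)(k,n)}$, where the second $\delta/2$ loss arises from sacrificing half of the frequency decay to obtain convergent geometric series in the ratio factors.

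\textbf{Main obstacle.} The principal difficulty is combinatorial bookkeeping rather than any single deep estimate. One must verify that, in each of the three regions (a)--(c) and after combining the prefactor $A(r_i,r_j,r_k)$ produced by Step 1 with the frequency ratio factors of $w^{s,\delta}$, the exponent of $r_i/\max(r_j,r_k)$ (resp.\ its reciprocal in region (c)) is strictly negative, so that the $i$-sum actually converges. This is the mechanism that forces exactly the loss from $\delta$ to $\delta/2$ and is entirely analogous to the argument for general decomposition spaces carried out in \cite{Nielsen2012}; my intention is to adapt the combinatorial lemmas from that paper to the $\alpha$-covering of Example \ref{ex:cov} and to the mixed-norm value of $J$ used here.
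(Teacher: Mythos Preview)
The paper does not supply an in-text proof of this proposition; it simply refers the reader to \cite{Nielsen2012}, where the result is established for general decomposition spaces. Your outline---separating the spatial sum over $\ell$ from the frequency sum over $i$, case-splitting on the relative sizes of $r_i,r_j,r_k$, and absorbing the losses into the passage $\delta\to\delta/2$---is exactly the strategy of that reference, so your approach is aligned with the paper's (cited) proof; as a plan it is sound, though the combinatorial bookkeeping you identify as the main obstacle still has to be carried out in detail.
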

We refer to \cite{Nielsen2012} for the proof of Proposition \ref{poison}. 
It follows from Proposition \ref{poison} that for $\delta_1,\delta_2>0$ we have
\begin{equation}\label{smellsliketeen}
\sum_{i,l\in \Zn}
w^{s,\delta_1}_{(j,m)(i,l)}w^{s,\delta_2}_{(i,l)(k,n)}\le C
w^{s,\min(\delta_1,\delta_2)/2}_{(j,m)(k,n)}
\end{equation}
which clearly proves that $\textrm{ad}_{\vec{p},q}^s$ is closed under composition.

As mentioned above, the class $\textrm{ad}_{\vec{p},q}^s$ depends only in a weak sense on the mixed-norm through the parameter $J$. However, the specific mixed-norm will play a much more central role when we consider the action of almost diagonal matrices on the discrete spaces $m^{s,\alpha}_{\vec{p},q}$. 

The following Proposition will show that  the almost diagonal matrices of Definition  \ref{doitmad} will act boundedly on $m^{s,\alpha}_{\vec{p},q}$.

\begin{proposition}\label{johnlennon}%
Assume that $s\in \bR$, $0\leq \alpha\leq 1$, $0<q< \infty$, and $\vec{p}\in (0,\infty)^n$, and suppose that $\mathbf{A} \in \textrm{ad}_{\vec{p},q}^s$. Then $\mathbf{A}$ is bounded on
$m^{s,\alpha}_{\vec{p},q}$.
\end{proposition}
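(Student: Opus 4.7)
I will follow the classical Frazier–Jawerth / Bownik blueprint for almost-diagonal matrices on decomposition-type spaces: dominate the matrix action pointwise by an expression involving the iterated maximal function $\cM_\theta$, invoke the mixed-norm maximal inequality \eqref{max} to pass to $L_{\vec p}$, and reduce finally to a Schur-type bound for a purely frequency-side matrix on $\ell^{q/r}(\Zn)$. Write $F_j^{\mathbf A}(x):=\sum_m r_j^{s+n/2}|(\mathbf Ab)_{j,m}|\mathbf 1_{Q(j,m)}(x)$, so that $\|\mathbf Ab\|_{m^{s,\alpha}_{\vec p,q}}^q=\sum_j\|F_j^{\mathbf A}\|_{\vec p}^q$. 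Using the almost-diagonal bound $|a_{(j,m)(k,n)}|\le Cw^{s,\delta}_{(j,m)(k,n)}$ combined with the bounded overlap of the tiles $\{Q(j,m)\}_m$ (each of radius $r_j^{-1}$), the sum over $m$ can be absorbed into the spatial factor, converting the discrete separation $|x_{k,n}-x_{j,m}|$ into a spatial distance involving $x$. The resulting pointwise bound has the form
\begin{equation*}
F_j^{\mathbf A}(x)\le C\sum_{k\in\Zn}r_k^{s+n/2}\min\!\bigl((r_j/r_k)^{J+\delta/2},(r_k/r_j)^{\delta/2}\bigr)c_{jk}^\delta\,h_j^k(x),
\end{equation*}
where $h_j^k(x)=\sum_{n\in\Zn}|b_{k,n}|(1+\min(r_k,r_j)|x-x_{k,n}|)^{-J-\delta}$ (with the appropriate identification of tile centers).

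\textbf{Controlling $h_j^k$ by a maximal function.} Introduce the continuous representative $g_k(y):=\sum_n|b_{k,n}|\mathbf 1_{Q(k,n)}(y)$, so that $G_k:=\|r_k^{s+n/2}g_k\|_{\vec p}$ satisfies $\|b\|_{m^{s,\alpha}_{\vec p,q}}=\|(G_k)_k\|_{\ell^q}$. Fix $\theta\in(0,\min(1,q,p_1,\ldots,p_n))$ sufficiently close to the minimum that $n/\theta<J+\delta/2$. Applying the $\theta$-inequality ($\theta\le 1$), substituting $|b_{k,n}|^\theta\le Cr_k^n\int_{Q(k,n)}g_k(y)^\theta\,\d y$, and using the comparability $(1+\min(r_k,r_j)|x-x_{k,n}|)\asymp(1+\min(r_k,r_j)|x-y|)$ for $y\in Q(k,n)$ (which follows from $\min(r_k,r_j)r_k^{-1}\le 1$) reduces $h_j^k(x)^\theta$ to a convolution-type integral $r_k^n\int g_k^\theta(y)(1+\min(r_k,r_j)|x-y|)^{-n-\theta\delta}\,\d y$. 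Dominating the radial kernel by the tensor product $\prod_{i=1}^n(1+\min(r_k,r_j)|z_i|)^{-1-\theta\delta/n}$ of monotone one-dimensional kernels and iterating the one-dimensional Hardy–Littlewood inequality then yields
\begin{equation*}
h_j^k(x)\le C\max\!\bigl(1,(r_k/r_j)^{n/\theta}\bigr)\,\cM_\theta(g_k)(x).
\end{equation*}
Merging this with the prefactor in the two subcases $r_k\le r_j$ and $r_k>r_j$ consolidates everything into $F_j^{\mathbf A}(x)\le C\sum_k r_k^{s+n/2}\tilde T_{jk}\,\cM_\theta(g_k)(x)$, where $\tilde T_{jk}$ is the product of a symmetric ratio factor (that decays in both $r_k/r_j$ and $r_j/r_k$ from unity) and the frequency weight $(1+\max(r_k,r_j)^{-1}|\xi_k-\xi_j|)^{-J-\delta}$.

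\textbf{Norm estimates and main obstacle.} Set $r:=\min(1,q,p_1,\ldots,p_n)$, so that $rJ=n$. Applying the sub-additivity \eqref{suba} with exponent $r$ to $\|F_j^{\mathbf A}\|_{\vec p}^r$, followed by the mixed-norm maximal inequality \eqref{max} (valid since $\theta<r\le\min(p_1,\ldots,p_n)$), delivers $\|F_j^{\mathbf A}\|_{\vec p}^r\le C\sum_k\tilde T_{jk}^r G_k^r$. Since $q/r\ge 1$, Minkowski's inequality then reduces the proposition to showing that the frequency-only matrix $(\tilde T_{jk}^r)_{j,k}$ acts boundedly on $\ell^{q/r}(\Zn)$, which by Schur's test is implied by
\begin{equation*}
\sup_{j\in\Zn}\sum_{k\in\Zn}\tilde T_{jk}^r<\infty\qquad\text{and}\qquad\sup_{k\in\Zn}\sum_{j\in\Zn}\tilde T_{jk}^r<\infty.
\end{equation*}
\emph{This Schur estimate is where I expect the main technical effort.} It is essentially the frequency-only analog of the composition result Proposition \ref{poison} (with all spatial indices summed out). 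Its proof rests on the moderate-growth and separation properties of $h_\alpha$ recorded in \eqref{eq:b}--\eqref{strawberry}, on the geometry of the $\alpha$-grid $\{\xi_k\}$, and on the gain $r(J+\delta)=n+r\delta>n$ which provides just enough decay for the sums to converge uniformly; the ratio factors supply the cross-scale cancellation needed for uniform convergence in both row and column.
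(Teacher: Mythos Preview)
Your proposal is correct and follows essentially the same route as the paper: a pointwise maximal-function bound on the spatial sum (the paper packages this as Lemma~\ref{le:max}, proved via dyadic annuli rather than your tensor-kernel argument), followed by $r$-subadditivity and the mixed-norm maximal inequality, and finally a frequency-side Schur bound, which is precisely Lemma~\ref{le:sum}. The only cosmetic differences are that the paper splits $\mathbf A=\mathbf A_0+\mathbf A_1$ according to $r_k\gtrless r_j$ and uses H\"older rather than Minkowski/Schur at the last step, and that your use of $\theta<r$ in $\cM_\theta$ is in fact slightly more careful than the paper's direct use of $\cM_r$.
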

\begin{proof}
 Let $s:=\{s_{k,\ell}\}_{k,\ell\in\Zd}\in
m^{s,\alpha}_{\vec{p},q}$, and put $r:=\min\{1,q,p_1,\ldots p_n\}$. We write
$\mathbf{A}:=\mathbf{A}_0+\mathbf{A}_1$ such that
\begin{equation*}
(\mathbf{A}_0s)_{(j,m)}\!=\!\!\sum_{k:r_k\ge r_j}\sum_{\ell\in\Zd}
a_{(j,m)(k,\ell)}s_{k,\ell}\hspace{0.2cm}\textrm{and}\hspace{0.2cm}(\mathbf{A}_1s)_{(j,m)}\!=\!\!\sum_{k:r_k<
r_j}\sum_{\ell\in\Zn} a_{(j,m)(k,\ell)}s_{k,\ell}.
\end{equation*}
By using Lemma \ref{le:max} we have
\begin{align*}
|(\mathbf{A}_0s)_{(j,m)}|&\le C\sum_{k:r_k\ge
r_j}\left(\frac{r_k}{r_j}\right)^{s+\frac{n}{2}-\frac{n}{r}-\frac{\delta}{2}}
c_{jk}^\delta
\sum_{\ell\in\Zn}\frac{|s_{k,\ell}|}{\left(1+r_j\left|x_{k,\ell}-x_{j,m}\right|\right)^{\frac{n}{r}+\delta}}\\
&\le C \sum_{k:r_k\ge
r_j}\left(\frac{r_k}{r_j}\right)^{s+\frac{n}{2}-\frac{\delta}{2}}
c_{jk}^\delta {\mathcal M}_r\Big(\sum_{\ell\in
\Zn}|s_{k,\ell}|\,\mathbf{1}_{Q(k,\ell)}\Big)(x),
\end{align*}
for $x\in Q(j,m)$. It then follows  that
\begin{align*}
\sum_{m\in\Zn} |(\mathbf{A}_0&s)_{(j,m)}\mathbf{1}_{Q(j,m)}| \le
C\sum_{k:r_k\ge
r_j}\left(\frac{r_k}{r_j}\right)^{s+\frac{n}{2}}
c_{jk}^\delta {\mathcal M}_r\Big(\sum_{\ell\in
\Zn}|s_{k,\ell}|\,\mathbf{1}_{Q(k,\ell)}\Big).
%&\le C\sum_{k:r_k\ge
%r_j}c_{jk}^\delta \bigg(\left(\frac{r_k}{r_j}\right)^{s+\frac{n}{2}}{\mathcal %M}_r\Big(\sum_{\ell\in
%\Zn}|s_{k,\ell}|\,\mathbf{1}_{Q(k,\ell)}\Big)\\
%&\le C\sum_{k:r_k\ge
%r_j}c_{jk}^\delta\bigg(\left(\frac{r_k}{r_j}\right)^{s+\frac{n}{2}}{\mathcal %M}_r\Big(\sum_{\ell\in
%\Zn}|s_{k,\ell}|\,\mathbf{1}_{Q(k,\ell)}\Big)\bigg)^q.
\end{align*}
We obtain, using the maximal estimate \eqref{max}, and the subadditivity  of $\|\cdot\|_{\vec{p}}^r$ given in Eq.\ \eqref{suba}, 
\begin{align}
\left\|\mathbf{A}_0s\right\|_{m^{s,\alpha}_{\vec{p},q}} &\le
C\bigg(\sum_{j\in\Zn} \bigg\|\sum_{k:r_k\ge r_j}c_{jk}^\delta r_k^{s+\frac{n}{2}} {\mathcal M}_r\Big(\sum_{\ell\in
\Zn}|s_{k,\ell}|\,\mathbf{1}_{Q(k,\ell)}\Big)\bigg\|_{\vec{p}}^q\bigg)^{1/q}\nonumber\\
&\le 
C\bigg(\sum_{j\in\Zn} \bigg\|\sum_{k:r_k\ge r_j}c_{jk}^\delta r_k^{s+\frac{n}{2}} \sum_{\ell\in
\Zn}|s_{k,\ell}|\,\mathbf{1}_{Q(k,\ell)}\bigg\|_{\vec{p}}^q\bigg)^{1/q}\nonumber\\
&= 
C\bigg(\sum_{j\in\Zn} \bigg\|\sum_{k:r_k\ge r_j}c_{jk}^\delta r_k^{s+\frac{n}{2}} \sum_{\ell\in
\Zn}|s_{k,\ell}|\,\mathbf{1}_{Q(k,\ell)}\bigg\|_{\vec{p}}^{r\cdot\frac{q}{r}}\bigg)^{1/q}\nonumber\\
&\le
C\bigg(\sum_{j\in\Zn} \bigg[\sum_{k:r_k\ge r_j}c_{jk}^{\delta r}\bigg\| r_k^{s+\frac{n}{2}} \sum_{\ell\in
\Zn}|s_{k,\ell}|\,\mathbf{1}_{Q(k,\ell)}\bigg\|_{\vec{p}}^{r}\bigg]^{\frac{q}{r}}\bigg)^{1/q}.\label{eq:big}
\end{align}
In the case $r<q$, we apply  H\"older's  inequality to the inner sum over $k$ in the estimate \eqref{eq:big}  to obtain
\begin{align*}\left\|\mathbf{A}_0s\right\|_{m^{s,\alpha}_{\vec{p},q}}&\le
C\bigg(\sum_{j\in\Zn} \bigg(\sum_{k:r_k\ge r_j}c_{jk}^{\delta r\frac{q}{q-r}}\bigg)^{\frac{q-r}{q}}\bigg[\sum_{k:r_k\ge r_j}c_{jk}^{\delta q}\bigg\| r_k^{s+\frac{n}{2}} \sum_{\ell\in
\Zn}|s_{k,\ell}|\,\mathbf{1}_{Q(k,\ell)}\bigg\|_{\vec{p}}^{q}\bigg]\bigg)^{1/q}\\
&\le
C\bigg(\sum_{j\in\Zn} \bigg[\sum_{k:r_k\ge r_j}c_{jk}^{\delta q}\bigg\| r_k^{s+\frac{n}{2}} \sum_{\ell\in
\Zn}|s_{k,\ell}|\,\mathbf{1}_{Q(k,\ell)}\bigg\|_{\vec{p}}^{q}\bigg]\bigg)^{1/q}\\
&\le
C\bigg(\sum_{k\in\Zn} \bigg\| r_k^{s+\frac{n}{2}} \sum_{\ell\in
\Zn}|s_{k,\ell}|\,\mathbf{1}_{Q(k,\ell)}\bigg\|_{\vec{p}}^{q}\bigg)^{1/q}\\
&=C\left\|s\right\|_{m^{s,\alpha}_{\vec{p},q}},
\end{align*}
where we have used  Lemma \ref{le:sum}. When $r=q\leq 1$, we may obtain the same estimate directly from \eqref{eq:big} using the subadditivity property \eqref{suba} with $r=q$.

The estimate of $\left\|\mathbf{A}_1s\right\|_{m^{s,\alpha}_{\vec{p},q}}$ follows from similar arguments and we leave the details  for the reader.
\end{proof}

\section{Some applications of the matrix algebra}\label{sec:appl}

%\subsection{Perturbation of frames and norming families}
Let us consider an application of Proposition \ref{johnlennon} to study various aspects of stability  of the $\varphi$-transform under perturbations. Suppose we fix $s$, $\alpha$, $\vec{p}$ and $q$, and consider $f\in M^{s,\alpha}_{\vec{p},q}(\bR^n)$. 
Let $\{\psi_{k,n}\}_{k,n\in\Zn}\subset L_2(\bR^n)$ be a system that satisfies, for some $C_1,C_2,\delta>0$ independent of $k$ and $n$,
\begin{align}
&|\psi_{k,n}(x)|\le C_1 t_k^{\frac{\nu}{2}}(1+t_k|x_{k,n}-x|)^{-2\left(J+\delta\right)}\label{bruce},\\
&|\hat{\psi}_{k,n}(\xi)|\le C_2
t_k^{-\frac{\nu}{2}}(1+t_k^{-1}|\xi_{k}-\xi|)^{-2\left(J+\delta\right)-
\frac{2}{\beta}\left(|s|+2J+\frac{3\delta}{2}\right)},\label{bspringsteen}
\end{align}
with  $r_k$, $\xi_k$, and $x_{k,\ell}$ defined in Eq.\ \eqref{eq:rk}, and with $J$ is defined in  Definition \ref{doitmad}.
Notice that we are {\em not} assuming that $\{\psi_{k,n}\}_{k,n\in\Zn}\subset \mathcal{S}(\bR^n)$, so we need to give meaning to the pairing $\langle f,\psi_{j,m}\rangle$.

We first notice that 
\begin{equation}\label{eq:as}
    \{\langle \varphi_{k,n},\psi_{j,m}\rangle\}\in \textrm{ad}_{\vec{p},q}^s,
\end{equation}
which follows from  using the estimates \eqref{bruce} and \eqref{bspringsteen} in Lemma \ref{bubbleboy2}.
Motivated by the fact that $\{\varphi_{k,n}\}_{k,n\in\Zn}$ is a tight frame for $L_2(\bR^n)$, see Eq.\ \eqref{eq:tf}, we formally define $\langle f,\psi_{j,m}\rangle$ as
\begin{equation}
\langle f,\psi_{j,m}\rangle:=\sum_{k,n\in \Zn}\langle \varphi_{k,n},\psi_{j,m}\rangle \langle
f,\varphi_{k,n}\rangle, \,\, f \in M^{s,\alpha}_{\vec{p},q}.
\end{equation}
Proposition \ref{johnlennon}, together with the observation in \eqref{eq:as}, shows that $\langle \cdot,\psi_{j,m}\rangle$ is a bounded linear functional on $M^{s,\alpha}_{\vec{p},q}$- More precisely, we have
\begin{align*}
\sum_{k,n\in\Zn}|\langle \varphi_{k,n},\psi_{j,m}\rangle||\langle
f,\varphi_{k,n}\rangle| \notag
&\le C\Big\|\Big\{\sum_{k,n\in\Zn}|\langle \varphi_{k,n},\psi_{j,m}\rangle| |\langle
f,\varphi_{k,n}\rangle| \Big\}_{j,m\in\Zn}\Big\|_{m^{s,\alpha}_{\vec{p},q}}\notag \\ &\le C' \|\langle
f,\varphi_{k,n}\rangle \|_{m^{s,\alpha}_{\vec{p},q}}\notag\\&\le
C''\|f\|_{M^{s,\alpha}_{\vec{p},q}},\notag
\end{align*}
where we used Theorem \ref{th:phi} for the final estimate. 
%We also notice that the first constant $C:=C(j)$ in the estimate depends on the level $j$ as seen by inspecting the definition of the (quasi-)norm on $m^{s,\alpha}_{\vec{p},q}$.
Furthermore, a similar argument shows that  $\{\psi_{j,m}\}_{j,m\in\Zn}$ is a norming family for $M^{s,\alpha}_{\vec{p},q}$ in the sense that  \begin{equation}\label{eq:norming} \|\langle f,\psi_{j,m}\rangle\|_{m^{s,\alpha}_{\vec{p},q}} \le C \|f\|_{M^{s,\alpha}_{\vec{p},q}},
\end{equation}
with $C$ independent of $f$. 

Notice that Eq.\ \eqref{eq:norming} provides an analog of the $\phi$-tranform for the system $\{\psi_{k,n}\}_{k,n\in\Zn}$. The observation in Eq.\ \eqref{eq:as} can also be used to obtain an analog of  the inverse $\phi$-transform for the same system. Suppose we have a finite sum
$$g:=\sum_{j,m} c_{j,m} \psi_{j,m}.$$
Then, using Theorem \ref{th:phi} and Proposition \ref{johnlennon},
\begin{align}
    \|g\|_{M^{s,\alpha}_{\vec{p},q}}\asymp \|\langle f,\varphi_{k,n}\rangle\|_{m^{s,\alpha}_{\vec{p},q}}
    \leq C  \|\langle \{c_{j,m}\}\|_{m^{s,\alpha}_{\vec{p},q}},\label{eq:TT}
\end{align}
since 
\begin{equation}
\langle g,\varphi_{k,n}\rangle:=\sum_{j,m\in \Zn}\langle \psi_{j,m},\varphi_{k,n}\rangle c_{j,m}.
\end{equation}
One can then extends the validity of \eqref{eq:TT} to any  $\{c_{j,m}\}\in {m^{s,\alpha}_{\vec{p},q}} $ by continuity.

\subsection{Frames obtained by perturbations}
We now consider a system $\{\psi_{k,n}\}_{k,n\in\Zn}\subset L_2(\bR^n)$ that is close to the $\varphi$-transform tight frame $\Phi:=\{\varphi_{k,n}\}_{k,n\in\Zn}$ in
the sense that there exists $\varepsilon, \delta, C_1,C_2,\delta>0$ independent of $k$ and $n$, such that
\begin{align}
&|\varphi_{k,n}(x)-\psi_{k,n}(x)|\le C_1 \varepsilon t_k^{\frac{\nu}{2}}(1+t_k|x_{k,n}-x|)^{-2\left(J+\delta\right)}\label{bruce4},\\
&|\hat{\varphi}_{k,n}(\xi)-\hat{\psi}_{k,n}(\xi)|\le C_2 \varepsilon
t_k^{-\frac{\nu}{2}}(1+t_k^{-1}|\xi_{k}-\xi|)^{-2\left(J+\delta\right)-
\frac{2}{\beta}\left(|s|+2J+\frac{3\delta}{2}\right)},\label{bspringsteen4}
\end{align}
with  $r_k$, $\xi_k$, and $x_{k,\ell}$ defined in Eq.\ \eqref{eq:rk}, and with $J$ is defined in  Definition \ref{doitmad}. 

Using a technique introduced by Kyriasiz and Petrushev \cite{MR2204289}, one obtains the following stability result. We have included the proof for the convenience of the reader.

\begin{proposition}\label{generation1}%
There exists $\varepsilon_0,C_1,C_2>0$ such that if $\{\psi_{k,n}\}_{k,n\in\Zn}$ satisfies \eqref{bruce4} and
\eqref{bspringsteen4} for some $0<\varepsilon \le \varepsilon_0$ and $f\in M^{s,\alpha}_{\vec{p},q}(\Rn)$, then we have
\begin{equation}
C_1\|f\|_{M^{s,\alpha}_{\vec{p},q}}\le \|\langle
f,\psi_{k,n}\rangle\|_{m^{s,\alpha}_{\vec{p},q}}\le C_2\|f\|_{M^{s,\alpha}_{\vec{p},q}}.
\end{equation}
\end{proposition}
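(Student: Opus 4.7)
The strategy is to obtain the upper bound directly from the norming inequality \eqref{eq:norming} and to derive the lower bound via a Neumann perturbation of the identity on $M^{s,\alpha}_{\vec{p},q}$.

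For the upper bound, I would first verify that $\{\psi_{k,n}\}$ is itself a family of molecules in the sense of Definition \ref{def:mole}. Writing $\psi_{k,n}=\varphi_{k,n}+(\psi_{k,n}-\varphi_{k,n})$ and combining the decay estimates \eqref{eq:e1}--\eqref{eq:e2} for $\varphi_{k,n}$ with the perturbation bounds \eqref{bruce4}--\eqref{bspringsteen4} yields \eqref{bruce}--\eqref{bspringsteen} for $\{\psi_{k,n}\}$ with constants uniform in $\varepsilon\le 1$. Inequality \eqref{eq:norming} then delivers the upper constant $C_2$.

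For the lower bound, I plan to introduce the operator
\begin{equation*}
Rf:=T_\varphi\bigl(\{\langle f,\psi_{k,n}\rangle\}_{k,n}\bigr)=\sum_{k,n}\langle f,\psi_{k,n}\rangle\varphi_{k,n},
\end{equation*}
which is bounded on $M^{s,\alpha}_{\vec{p},q}$ by composing Theorem \ref{th:phi} with the upper bound just established. Using the tight-frame identity $f=\sum_{k,n}\langle f,\varphi_{k,n}\rangle\varphi_{k,n}$, the error $Ef:=f-Rf$ can be expressed as $Ef=T_\varphi\bigl(\{\langle f,\varphi_{k,n}-\psi_{k,n}\rangle\}_{k,n}\bigr)$, and it suffices to show $\|E\|\le C\varepsilon$. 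By \eqref{bruce4}--\eqref{bspringsteen4}, the family $\{\varphi_{k,n}-\psi_{k,n}\}$ is itself a family of molecules with constants of size $O(\varepsilon)$; Lemma \ref{bubbleboy2} therefore places the change-of-frame matrix $\{\langle\varphi_{j,m},\varphi_{k,n}-\psi_{k,n}\rangle\}$ in $\textrm{ad}_{\vec{p},q}^s$ with almost-diagonal norm of size $O(\varepsilon)$. Expanding $\langle f,\varphi_{k,n}-\psi_{k,n}\rangle$ via the tight-frame expansion of $f$ (exactly as in the discussion leading to \eqref{eq:norming}), Proposition \ref{johnlennon} together with Theorem \ref{th:phi} yield $\|Ef\|_{M^{s,\alpha}_{\vec{p},q}}\le C'\varepsilon\,\|f\|_{M^{s,\alpha}_{\vec{p},q}}$. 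Choosing $\varepsilon_0$ so that $C'\varepsilon_0<1$, a Neumann-type series produces a bounded inverse $R^{-1}$ on $M^{s,\alpha}_{\vec{p},q}$, and the lower bound follows from
\begin{equation*}
\|f\|_{M^{s,\alpha}_{\vec{p},q}}\le\|R^{-1}\|\,\|Rf\|_{M^{s,\alpha}_{\vec{p},q}}\le C\,\|\langle f,\psi_{k,n}\rangle\|_{m^{s,\alpha}_{\vec{p},q}}.
\end{equation*}

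The main point I expect to verify with care is that the particular exponents imposed in \eqref{bruce4}--\eqref{bspringsteen4} are tailored precisely to the almost-diagonal decay exponents built into Definition \ref{doitmad}, so that Lemma \ref{bubbleboy2} not only places the change-of-frame matrix in $\textrm{ad}_{\vec{p},q}^s$ but also lets its almost-diagonal norm inherit the $\varepsilon$-scaling from the difference $\varphi_{k,n}-\psi_{k,n}$. The Neumann step itself is routine in the quasi-Banach setting, since by \eqref{suba} the space $M^{s,\alpha}_{\vec{p},q}$ is $r$-normable for any $r\le\min\{1,q,p_1,\dots,p_n\}$, so that $\|(I-E)^{-1}\|\le(1-\|E\|^r)^{-1/r}$ whenever $\|E\|<1$.
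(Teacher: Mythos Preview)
Your argument is correct, but it is more elaborate than the paper's. The paper obtains the upper bound exactly as you do, via the norming inequality \eqref{eq:norming}. For the lower bound, however, the paper works entirely on the sequence side and never introduces the operator $R$ or a Neumann series: it simply observes that $\{\varepsilon^{-1}(\varphi_{k,n}-\psi_{k,n})\}$ is itself a norming family, so $\|\langle f,\varphi_{k,n}-\psi_{k,n}\rangle\|_{m^{s,\alpha}_{\vec{p},q}}\le C\varepsilon\|f\|_{M^{s,\alpha}_{\vec{p},q}}$, and then applies the quasi-triangle inequality in $m^{s,\alpha}_{\vec{p},q}$ together with Theorem \ref{th:phi} to write
\[
\|f\|_{M^{s,\alpha}_{\vec{p},q}}\le C\|\langle f,\varphi_{k,n}\rangle\|_{m^{s,\alpha}_{\vec{p},q}}\le C'\bigl(\|\langle f,\psi_{k,n}\rangle\|_{m^{s,\alpha}_{\vec{p},q}}+\varepsilon\|f\|_{M^{s,\alpha}_{\vec{p},q}}\bigr),
\]
and absorbs the last term for $\varepsilon<1/C'$. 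Your route passes through $T_\varphi$ to lift everything back to $M^{s,\alpha}_{\vec{p},q}$ and then inverts $R=I-E$ by Neumann series; this is valid (and your remark on $r$-normability handles the quasi-Banach issue), but the extra operator-theoretic layer is unnecessary here. The Neumann technique you deploy is precisely what the paper reserves for the stronger statement in Proposition \ref{layla}, where one must actually invert the frame operator $S$ rather than merely establish a two-sided coefficient estimate.
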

\begin{proof}
 We have already observed that  $\{\psi_{k,n}\}_{k,n\in\Zn}$ is a norming
family, which gives the upper bound. Thus we only need to establish the
lower bound. For this we notice that
$\{\varepsilon^{-1}(\varphi_{k,n}-\psi_{k,n})\}_{k,n\in\Zn}$ is also a norming family
so we have
\begin{equation*}
\|\langle f,\varphi_{k,n}-\psi_{k,n}\rangle\|_{m^{s,\alpha}_{\vec{p},q}}\le C
\varepsilon \|f\|_{M^{s,\alpha}_{\vec{p},q}}.
\end{equation*}
It then follows from  Theorem \ref{th:phi} that
\begin{align*}
\|f\|_{M^{s,\alpha}_{\vec{p},q}} &\le C \|\langle
f,\varphi_{k,n}\rangle\|_{m^{s,\alpha}_{\vec{p},q}}\\
&\le C' (\|\langle f,\psi_{k,n}\rangle\|_{m^{s,\alpha}_{\vec{p},q}}+\|\langle
f,\varphi_{k,n}-\psi_{k,n}\rangle\|_{m^{s,\alpha}_{\vec{p},q}}) \\
&\le C''(\|\langle f,\psi_{k,n}\rangle\|_{m^{s,\alpha}_{\vec{p},q}}+\varepsilon
\|f\|_{M^{s,\alpha}_{\vec{p},q}}).
\end{align*}
By choosing $\varepsilon<1/C''$ we get the lower bound. \\
\end{proof}

One can use the freedom provided by the slack parameter $\varepsilon_0>0$ in Proposition \ref{generation1} to impose various desirable properties on the system $\{\psi_{k,n}\}_{k,n\in\Zn}$.   As an example, we will construct frames with compact support in Section \ref{sec:csupp}.

By combining Propositions \ref{johnlennon} and \ref{generation1}, we can also deduce various boundedness properties of  the frame
operator
\begin{equation*}
 S f=\sum_{k,n\in\Zn}\langle f,\psi_{k,n}\rangle
\psi_{k,n}.
\end{equation*}
In fact, notice that
\begin{equation*}
 \langle S f,\varphi_{j,m}\rangle =\sum_{k,n\in\Zn}\langle f,\psi_{k,n}\rangle
\langle \psi_{k,n},\varphi_{j,m}\rangle,
\end{equation*}
and since we have the almost diagonal estimate
$$\{\langle \psi_{k,n},\varphi_{j,m}\rangle\}\in \textrm{ad}_{\vec{p},q}^s,$$
one obtains $\{ \langle S f,\varphi_{j,m}\rangle\}_{j,m}\in m^{s,\alpha}_{\vec{p},q}$. Using Theorem \ref{th:phi} this implies that $$S:M^{s,\alpha}_{\vec{p},q}\rightarrow M^{s,\alpha}_{\vec{p},q}.$$

 In case the operator $S$ is invertible on $M^{s,\alpha}_{\vec{p},q}$, one can easily obtain a corresponding norm convergent frame expansion 
 \begin{equation}\label{eq:fra}
 f=SS^{-1} f= \sum_{j,m} \langle S^{-1}f,\psi_{j,m}\rangle \psi_{j,m}, \qquad f\in M^{s,\alpha}_{\vec{p},q}.     \end{equation}
 The question on invertibility of $S$ is addressed in the following Proposition \ref{layla} . We have included the proof of Proposition \ref{layla} for the convenience of the reader, but mention that the proof follows the same lines as in  \cite{MR2204289}. 
\begin{proposition}\label{layla}%
There exists $\varepsilon_0>0$ such that if $\{\psi_{k,n}\}_{k,n\in\Zn}$ is a frame for $L_2(\Rn)$
and satisfies \eqref{bruce4} and \eqref{bspringsteen4} for some $0<\varepsilon\le\varepsilon_0$, then $S$ is boundedly invertible on $M^{s,\alpha}_{\vec{p},q}$.
\end{proposition}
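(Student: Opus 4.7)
The plan is to show that for small enough $\varepsilon_0$, the operator $I - S$ has operator norm strictly less than $1$ on $M^{s,\alpha}_{\vec{p},q}$, so that $S = I - (I - S)$ is invertible on $M^{s,\alpha}_{\vec{p},q}$ via a Neumann series argument. The key observation is that, because $\Phi = \{\varphi_{k,n}\}$ is a \emph{Parseval} tight frame for $L_2(\bR^n)$, its own frame operator $S_\varphi$ equals the identity, so $S - I = S - S_\varphi$ can be compared via the $\varphi$-transform matrix.

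Concretely, I would first expand $(S-I)f$ using the tight frame identity \eqref{eq:tf} (initially on a dense subset of $L_2\cap M^{s,\alpha}_{\vec{p},q}$), obtaining
\begin{equation*}
\langle (S-I)f,\varphi_{j,m}\rangle
= \sum_{i,l\in\Zn} B_{(j,m)(i,l)}\,\langle f,\varphi_{i,l}\rangle,
\end{equation*}
where
\begin{equation*}
B_{(j,m)(i,l)} = \sum_{k,n}\langle\varphi_{i,l},\psi_{k,n}-\varphi_{k,n}\rangle\langle\psi_{k,n},\varphi_{j,m}\rangle
+ \sum_{k,n}\langle\varphi_{i,l},\varphi_{k,n}\rangle\langle\psi_{k,n}-\varphi_{k,n},\varphi_{j,m}\rangle,
\end{equation*}
after subtracting the trivially identical expansion $\sum_{k,n}\langle\varphi_{i,l},\varphi_{k,n}\rangle\langle\varphi_{k,n},\varphi_{j,m}\rangle=\langle\varphi_{i,l},\varphi_{j,m}\rangle$ from the definition.

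Next I would observe that the perturbation bounds \eqref{bruce4} and \eqref{bspringsteen4} say exactly that the family $\{\varepsilon^{-1}(\psi_{k,n}-\varphi_{k,n})\}$ is a family of molecules in the sense of Definition \ref{def:mole} with appropriate $M,N$. Combined with the fact that $\Phi$ and $\{\psi_{k,n}\}$ are themselves such molecules, Lemma \ref{bubbleboy2} yields that the four gram-type matrices appearing above all belong to $\mathrm{ad}_{\vec{p},q}^s$, with the two that contain a factor $\psi_{k,n}-\varphi_{k,n}$ having almost-diagonal constant $O(\varepsilon)$. The composition identity \eqref{smellsliketeen} then places $\mathbf{B} = \{B_{(j,m)(i,l)}\}$ in $\mathrm{ad}_{\vec{p},q}^s$ with constant at most $C\varepsilon$, and Proposition \ref{johnlennon} gives $\|\mathbf{B}\|_{m^{s,\alpha}_{\vec{p},q}\to m^{s,\alpha}_{\vec{p},q}}\le C'\varepsilon$. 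Chaining this with the two halves of Theorem \ref{th:phi} produces the bound $\|I - S\|_{M^{s,\alpha}_{\vec{p},q}\to M^{s,\alpha}_{\vec{p},q}}\le C''\varepsilon$.

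Finally, choosing $\varepsilon_0 < 1/C''$, the Neumann series $\sum_{k\ge 0}(I-S)^k$ converges in the operator norm on $M^{s,\alpha}_{\vec{p},q}$ to a bounded inverse of $S$, giving the conclusion. The main obstacle I foresee is purely bookkeeping: the matrix $\mathbf{B}$ is defined by a double series and we are manipulating it a priori only on $L_2$, so one has to justify the interchange of summations and the extension of the identity $S-I = T_\varphi \mathbf{B} S_\varphi$ from $L_2\cap M^{s,\alpha}_{\vec{p},q}$ to all of $M^{s,\alpha}_{\vec{p},q}$ by density and continuity, using that $T_\varphi \circ S_\varphi = \mathrm{id}$ on $M^{s,\alpha}_{\vec{p},q}$ by Theorem \ref{th:phi} and that all intermediate sums converge absolutely in $m^{s,\alpha}_{\vec{p},q}$ by the almost-diagonal estimates.
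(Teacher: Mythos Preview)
Your proposal is correct and follows essentially the same route as the paper: both reduce invertibility to the Neumann-series bound $\|(I-S)f\|_{M^{s,\alpha}_{\vec{p},q}}\le C\varepsilon\|f\|_{M^{s,\alpha}_{\vec{p},q}}$, obtain this by writing the matrix $\{\langle(I-S)\varphi_{k,n},\varphi_{j,m}\rangle\}$ as a sum of two products of Gram-type matrices (your telescoping identity is the same as the paper's $\mathcal{D}=\mathcal{D}_1\mathcal{D}_2+\mathcal{D}_3\mathcal{D}_4$, just with the two difference terms grouped in the opposite order), and then apply Lemma~\ref{bubbleboy2}, the composition estimate~\eqref{smellsliketeen}, Proposition~\ref{johnlennon}, and Theorem~\ref{th:phi}. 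The only cosmetic difference is that the paper invokes self-adjointness of $S$ to pass from $\langle(I-S)f,\varphi_{j,m}\rangle$ to the matrix acting on $\{\langle f,\varphi_{k,n}\rangle\}$, whereas you expand directly via the tight-frame identity; the resulting matrices coincide.
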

\begin{proof}
 The fact that $\{\psi_{k,n}\}_{k,n\in\Zn}$ is a
frame for $L_2(\Rn)$ ensures that $S^{-1}$ exists as a bounded operator on $L_2(\Rn)$. To verify that $S^{-1}$ is bounded on $M^{s,\alpha}_{\vec{p},q}(\Rn)$, it suffices to show that
\begin{equation}\label{americanpie}
\|(I-S)f\|_{M^{s,\alpha}_{\vec{p},q}}\le C\varepsilon\|f\|_{M^{s,\alpha}_{\vec{p},q}}, \qquad f\in M^{s,\alpha}_{\vec{p},q},
\end{equation}
choosing $\varepsilon$ small enough and using a standard Neumann series approach. Assume
for the moment that $\mathcal{D}:=\{d_{(j,m)(k,n)}\}:=\{\langle (I-S)\varphi_{k,n},
\varphi_{j,m}\rangle\}$ satisfies
\begin{equation}\label{wondersteve}
\|\mathcal{D}s\|_{m^{s,\alpha}_{\vec{p},q}}\le C\varepsilon\|s\|_{m^{s,\alpha}_{\vec{p},q}}.
\end{equation}
By using that $\{\psi_{k,n}\}_{k,n\in\Zn}$ is a frame for $L_2(\Rn)$, we have that $S$ is self-adjoint which leads to
\begin{align*}
\|(I-S)f\|_{M^{s,\alpha}_{\vec{p},q}}&\le
C\|\langle(I-S)f,\varphi_{j,m}\rangle\|_{m^{s,\alpha}_{\vec{p},q}}=C\|\mathcal{D}\{\langle
f,\varphi_{k,n}\rangle\}_{k,n\in\Zn}\|_{m^{s,\alpha}_{\vec{p},q}}\\ &\le C\varepsilon\|\langle
f,\varphi_{j,m}\rangle\|_{m^{s,\alpha}_{\vec{p},q}}\le C\varepsilon
\|f\|_{M^{s,\alpha}_{\vec{p},q}},
\end{align*}
where we have used Proposition \ref{johnlennon} and Theorem \ref{th:phi}.
So to show \eqref{americanpie} it suffices to prove
\eqref{wondersteve}. 
We obtain the  decomposition
\begin{equation*}
\mathcal{D}=\mathcal{D}_1\mathcal{D}_2+\mathcal{D}_3\mathcal{D}_4,
\end{equation*}
with
\begin{align*}
&\mathcal{D}_1:=\{d_{1(j,m)(i,l)}\}:=\{\langle\varphi_{i,l}-\psi_{i,l},\varphi_{j,m}\rangle\},\\
&\mathcal{D}_2:=\{d_{2(i,l)(k,n)}\}:=\{\langle\varphi_{k,n},\varphi_{i,l}\rangle\},\\
&\mathcal{D}_3:=\{d_{3(j,m)(i,l)}\}:=\{\langle\psi_{i,l},\varphi_{j,m}\rangle\},\\
&\mathcal{D}_4:=\{d_{4(i,l)(k,n)}\}:=\{\langle\varphi_{k,n},\varphi_{i,l}-\psi_{i,l}\rangle\},
\end{align*}
by observing that 
\begin{align*}
\langle(I-S)\varphi_{k,n},\varphi_{j,m}\rangle&=\sum_{i,l\in\Zn}\langle\varphi_{k,n},\varphi_{i,l}\rangle\langle\varphi_{i,l},\varphi_{j,m}\rangle
-\sum_{i,l\in\Zn}\langle\varphi_{k,n},\psi_{i,l}\rangle\langle\psi_{i,l},\varphi_{j,m}\rangle\\
&=\sum_{i,l\in\Zn}\langle\varphi_{k,n},\varphi_{i,l}\rangle\langle\varphi_{i,l}-\psi_{i,l},\varphi_{j,m}\rangle
+\sum_{i,l\in\Zn}\langle\varphi_{k,n},\varphi_{i,l}-\psi_{i,l}\rangle\langle\psi_{i,l},\varphi_{j,m}\rangle.
\end{align*}
Since $\{\psi_{k,n}\}_{k,n\in\Zn}$ satisfies \eqref{bruce4} and
\eqref{bspringsteen4}, we deduce from Lemma \ref{bubbleboy2} that
$$\varepsilon^{-1}\mathcal{D}_1,\mathcal{D}_2,\mathcal{D}_3,
 \varepsilon^{-1}\mathcal{D}_4\in \textrm{ad}_{\vec{p},q}^s.$$ Next, we use that $\textrm{ad}_{\vec{p},q}^s$ is closed under composition, see Eq.\ \eqref{smellsliketeen}, and apply Proposition \ref{johnlennon} to obtain,
\begin{equation*}
\|\mathcal{D}s\|_{m^{s,\alpha}_{\vec{p},q}}\le C\varepsilon\|s\|_{m^{s,\alpha}_{\vec{p},q}}.
\end{equation*}
Consequently, \eqref{americanpie} holds,
and for sufficiently small $\varepsilon$, the operator $S$ is
boundedly invertible on $M^{s,\alpha}_{\vec{p},q}$. 
\end{proof}

\subsection{Compactly supported frames}\label{sec:csupp}
\indent For numerical purposes, it is often convenient to have access to discrete compactly supported frames for the smoothness spaces 
$M^{s,\alpha}_{\vec{p},q}$. The $\varphi$-transform consists of band-limited frame elements that cannot be compactly supported, but one can bypass this restriction by approximating the $\varphi$-transform elements by suitable compactly supported functions. In fact, it suffices to prove that there exists a system of functions $\{\tau_k\}_{k\in\Zn}\subset L_2(\Rn)$ which is close enough to the family $\{\mu_k\}_{k\in\Zn}$ given in \eqref{eq:muk}:
\begin{align*}
|\mu_k(x)-\tau_k(x)|&\le \varepsilon(1+|x|)^{-2\left(J+\delta\right)},\\
|\hat{\mu}_k(\xi)-\hat{\tau}_k(\xi)|&\le \varepsilon(1+|\xi|)^{-2\left(J+\delta\right)-\frac{2}{\beta}\left(|s|+{2J}+\frac{3\delta}{2}\right)}.
\end{align*}
 The system
\begin{equation}\label{eq:system}
\{\psi_{k,\ell}\}_{k,\ell\in\Zn}:=\Big\{r_k^{n/2}\tau_k\Big({r_k}x-\frac{\pi}{a}n\Big)e^{ix
\cdot \xi_k}\Big\}_{k,\ell\in\Zn}
\end{equation}
will then satisfy \eqref{bruce4} and \eqref{bspringsteen4}. First, we take $g\in C^{1}(\bR^n)\cap L_2(\bR^n)$, $\hat{g}(0)\not= 0$, which for fixed $N,M>0$ satisfies
\begin{align}
|g^{(\kappa)}(x)|&\le C(1+|x|)^{-N-1}, \,\,
|\kappa|\le 1, \label{kappastaff1} \\
|\hat{g}(\xi)|&\le C(1+|\xi|)^{-M-1}.
\label{kappastaff2}
\end{align}
Next for $m\ge 1$, we define $g_m(x):=C_g m^n g(m x)$,
where $C_g:=\hat{g}(0)^{-1}$. It then follows that
\begin{align}
|g_m^{(\kappa)}(x)|&\le C
m^{n+\alpha_2|\kappa|}(1+m|x|)^{-N-1},\,\,
 |\kappa|\le 1, \notag\\
&\phantom{C}\int_{\bR^n} g_m(x) \d x = 1 \label{gstar2},\\
|\hat{g}_m(\xi)|&\le C m^{M+\alpha_2}(1+|\xi|)^{-M-1}.\notag
\end{align}
To construct $\tau_k$ we also need the following set of $K$-term  linear combinations,
\begin{equation*}
\Theta_{K,m}=\{\psi : \psi(\cdot)=\sum_{i=1}^K a_i
g_m(\cdot+b_i),a_i\in \bC, b_i\in \Rn\}.
\end{equation*}
We can now call one the following general approximation result showing that any function with sufficient decay in both direct and frequency space can be approximated to an arbitrary degree by a finite linear combination of translates and dilates of another function with similar decay.
\begin{proposition}\label{notmylove}%
Let $N'>N>n$ and $M'>M>n$. If $g\in C^{1}(\Rn)\cap L_2(\Rn)$, $\hat{g}(0)\not= 0$, fulfills \eqref{kappastaff1} and \eqref{kappastaff2} and $\mu_k\in C^{1}(\Rn)\cap L_2(\Rn)$ fulfill
\begin{align*}
|\mu_k(x)|&\le C(1+|x|)^{-N'}, \\
|\mu_k^{(\kappa)}(x)|&\le C, \, |\kappa|\le 1,
\\ |\hat{\mu}_k(\xi)|&\le C (1+|\xi|)^{-M'},
\end{align*}
then for any $\varepsilon>0$ there exists $K,m \ge 1$ and $\tau_k \in
\Theta_{K,m}$ such that
\begin{align}
|\mu_k(x)-\tau_k(x)|&\le \varepsilon (1+|x|)^{-N}, \label{lostsomeone1} \\
|\hat{\mu}_k(\xi)-\hat{\tau}_k(\xi)|&\le \varepsilon (1+|\xi|)^{-M}. \label{lostsomeone2}
\end{align}
\end{proposition}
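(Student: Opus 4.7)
The plan is to construct $\tau_k$ in two stages: first, approximate $\mu_k$ by the convolution $g_m * \mu_k$ for $m$ sufficiently large, using the approximate-identity property $\int g_m\,dx=1$ from \eqref{gstar2}; second, discretize the convolution as a truncated Riemann sum of the form $\tau_k(x)=\sum_{i=1}^{K} a_i g_m(x-b_i)$, where $\{b_i\}_{i=1}^K$ is a uniform grid of spacing $\delta$ filling a cube of side $R$ centered at the origin and $a_i:=\mu_k(b_i)\delta^n$. This is the classical ``mollify and sample'' construction, tailored so that each error can be controlled simultaneously in direct space with weight $(1+|x|)^{N}$ and in frequency space with weight $(1+|\xi|)^{M}$.

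For the approximate-identity step, on the direct side I would write
\begin{equation*}
(g_m * \mu_k - \mu_k)(x) = \int_{\bR^n} g_m(y)\bigl[\mu_k(x-y)-\mu_k(x)\bigr]\,dy,
\end{equation*}
and split the integral in $y$ into $|y|\le\sqrt{1+|x|}$ (using the $C^1$ bound on $\mu_k$ together with $\int |y||g_m(y)|\,dy \lesssim m^{-1}$) and $|y|>\sqrt{1+|x|}$ (using the decay $(1+|x|)^{-N'}$ of $\mu_k$ with margin $N'>N$); both contributions decay like $(1+|x|)^{-N}$ with a constant that tends to $0$ as $m\to\infty$. On the Fourier side, the identity $(\mu_k - g_m * \mu_k)^\wedge(\xi) = (1-c\hat g_m(\xi))\hat\mu_k(\xi)$ reduces the problem to showing that $1-c\hat g_m(\xi)$ is small on compact sets (by continuity of $\hat g$ at $0$, since $\hat g_m(\xi)$ is essentially $\hat g(\xi/m)/\hat g(0)$) and absorbing the remainder for $|\xi|$ large by using the gap $M'>M$ in $|\hat\mu_k(\xi)|\lesssim(1+|\xi|)^{-M'}$.

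For the discretization step, $m$ is fixed and one takes $R$ large and $\delta$ small. The truncation error $\int_{|y|>R}\mu_k(y)g_m(x-y)\,dy$ is controlled by the decay gap $N'>N$, and the pointwise Riemann-sum error on the grid of spacing $\delta$ is $O(\delta)$ by the $C^1$-smoothness of $\mu_k$ and $g_m$, yielding \eqref{lostsomeone1}. For the Fourier-side bound \eqref{lostsomeone2}, the key identity is
\begin{equation*}
\widehat{g_m * \mu_k}(\xi) - \hat\tau_k(\xi) = \hat g_m(\xi)\Bigl[c\hat\mu_k(\xi) - \sum_{i=1}^K a_i e^{-ib_i\cdot\xi}\Bigr],
\end{equation*}
which isolates a Riemann-sum error for the Fourier integral defining $\hat\mu_k$. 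Bounding the bracket pointwise by $C(\delta\brac{\xi} + R^{-(N'-n)})$ for moderate $|\xi|$ and by $\|\mu_k\|_{L_1}+C(1+|\xi|)^{-M'}$ uniformly, and then multiplying by $|\hat g_m(\xi)|\lesssim m^{M+\alpha_2}(1+|\xi|)^{-M-1}$, produces the required bound.

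The main obstacle is the frequency-side estimate at large $|\xi|$: the naive Riemann-sum error grows linearly in $|\xi|$, while the target decays like $(1+|\xi|)^{-M}$. This is overcome by splitting into $|\xi|\le\rho$ (where $\hat g_m$ is bounded and the $\delta\brac{\xi}$ factor is tamed by choosing $\delta$ small depending on $\rho$) and $|\xi|>\rho$ (where the extra decay $(1+|\xi|)^{-1}$ in $|\hat g_m(\xi)|$ absorbs the linear growth and the margin $M'>M$ provides slack). Since all constants in the hypotheses on $\mu_k$ are uniform in $k$, a single choice of $K$ and $m$ suffices for every $k$.
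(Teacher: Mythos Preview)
The paper does not give its own proof of this proposition; it simply refers the reader to \cite{Nielsen2012}. Your two–stage ``mollify, then sample'' strategy is exactly the method used there (and goes back to Kyriazis--Petrushev \cite{MR2204289}), so at the structural level your plan matches the intended argument.

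There is, however, a concrete gap in your stage-1 spatial estimate. On the region $|y|\le\sqrt{1+|x|}$ you invoke only the uniform bound $|\nabla\mu_k|\le C$, which via the mean value theorem gives $|\mu_k(x-y)-\mu_k(x)|\le C|y|$ with \emph{no decay in $x$ whatsoever}. Integrating this against $|g_m(y)|$ produces a quantity of order $m^{-1}$ uniformly in $x$, so $(1+|x|)^{N}$ times this contribution blows up as $|x|\to\infty$; your claim that ``both contributions decay like $(1+|x|)^{-N}$'' is therefore false for this piece as written. The hypotheses give no decay for $\nabla\mu_k$, only a uniform bound, so the $C^{1}$ estimate alone cannot supply spatial decay. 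The standard repair is to interpolate: for $|y|\le 1$ and $|x|$ large one has simultaneously $|\mu_k(x-y)-\mu_k(x)|\le C|y|$ and $|\mu_k(x-y)-\mu_k(x)|\le C(1+|x|)^{-N'}$, so taking the geometric mean with exponent $\theta=1-N/N'\in(0,1)$ yields
\[
|\mu_k(x-y)-\mu_k(x)|\le C|y|^{\theta}(1+|x|)^{-N},
\]
after which $\int|g_m(y)||y|^{\theta}\,dy\lesssim m^{-\theta}$ supplies the smallness. This is precisely where the margin $N'>N$ is spent on the direct side. A second, minor imprecision: in your stage-2 frequency estimate the Riemann-sum error for $\hat\mu_k$ carries a factor $R^{n}$ (since $|\nabla\mu_k|$ has no decay, one can only bound $\int_{|y|\le R}|\nabla(\mu_k e^{-iy\cdot\xi})|\,dy$ by $C R^{n}(1+|\xi|)$), so the bracket is $O(\delta R^{n}\brac{\xi}+R^{-(N'-n)})$ rather than $O(\delta\brac{\xi}+R^{-(N'-n)})$; this does not affect the conclusion, since one simply chooses $\delta$ after $R$.
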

The proof of Proposition \ref{notmylove} can be found in \cite{Nielsen2012}. We can now summarise the results in the following Corollary.

\begin{corollary}\label{mualim1}%
Let $s\in \bR$, $0\leq \alpha\leq 1$, $0<q< \infty$, and $\vec{p}\in (0,\infty)^n$, and let $J$ be as in Definition \ref{doitmad}. 
Assume that $g\in C^{1}(\bR^n)\cap L_2(\bR^n)$, with $\hat{g}(0)\not= 0$, fulfills
\eqref{kappastaff1} and \eqref{kappastaff2} with $N=2\left(J+\delta\right)$ and $M=2\left(J+\delta\right)+\frac{2}{\beta}\left(|s|+2J+\frac{3\delta}{2}\right)$ for some $\delta>0$.
Then there exists $K,m \ge 1$ and $\tau_k \in
\Theta_{K,m}$ such that the system 
$\{\psi_{k,\ell}\}_{k,\ell\in\Zn}$ defined in Eq.\ \eqref{eq:system} is a frame for $M^{s,\alpha}_{\vec{p},q}$ in the sense that there exist constants $C_1$ and $C_2$ such that for 
$f\in M^{s,\alpha}_{\vec{p},q}$,
\begin{equation*}
C_1\|f\|_{M^{s,\alpha}_{\vec{p},q}}\le \|\langle
f,\psi_{k,\ell}\rangle\|_{m^{s,\alpha}_{\vec{p},q}}\le C_2\|f\|_{M^{s,\alpha}_{\vec{p},q}},
\end{equation*}
and
\begin{equation*}
f=\sum_{k,\ell\in\Zn}\langle f,S^{-1}\psi_{k,\ell}\rangle \psi_{k,\ell},
\end{equation*}
with norm convergence in $M^{s,\alpha}_{\vec{p},q}$.
\end{corollary}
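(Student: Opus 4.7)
The plan is to produce the compactly supported frame by bringing the $\varphi$-transform system \eqref{eq:muk} within range of the perturbation theory of Propositions \ref{generation1} and \ref{layla}. Fix the tolerance $\varepsilon_0>0$ produced jointly by Propositions \ref{generation1} and \ref{layla} (for the given parameters $s,\alpha,\vec{p},q$ as well as for the auxiliary choice $s=0,\vec{p}=(2,\ldots,2),q=2$ that will be needed to obtain the $L_2$ frame property). I would apply the approximation result Proposition \ref{notmylove} to replace each $\mu_k$ in \eqref{eq:muk} by a function $\tau_k\in\Theta_{K,m}$ satisfying $|\mu_k-\tau_k|\le\varepsilon_0(1+|\cdot|)^{-N}$ and $|\hat{\mu}_k-\hat{\tau}_k|\le\varepsilon_0(1+|\cdot|)^{-M}$, where $N=2(J+\delta)$ and $M=2(J+\delta)+\tfrac{2}{\beta}(|s|+2J+\tfrac{3\delta}{2})$ are precisely the decay orders required by \eqref{bruce4}--\eqref{bspringsteen4}. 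The system $\{\psi_{k,\ell}\}$ of \eqref{eq:system} is then the corresponding perturbation of $\{\varphi_{k,\ell}\}$.

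First I would verify that the family $\{\mu_k\}_{k\in\Zn}$ satisfies uniformly in $k$ the hypotheses of Proposition \ref{notmylove} with some $N'>N$ and $M'>M$ (i.e.\ uniform $C^1$ and $L_2$ bounds, uniform two-sided decay, and $\hat{\mu}_k(0)\ne 0$). This follows from the construction \eqref{eq:theta} of $\theta_k^\alpha$ from a single fixed Schwartz profile together with Proposition \ref{prop:psideriv}(iv): on the Fourier side $\hat{\mu}_k$ is, after a $k$-independent rescaling, essentially a fixed smooth bump, and $\mu_k$ is the corresponding Schwartz inverse, so the estimates in Proposition \ref{prop:psideriv} transfer uniformly. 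Granted this uniformity, an inspection of the proof of Proposition \ref{notmylove} shows that a single pair $(K,m)$ can be chosen to work simultaneously for every $\mu_k$ at the level $\varepsilon_0$; this is the point where the $k$-dependence of the approximants is reduced to a single universal dictionary. Substituting $\tau_k$ into \eqref{eq:system} and subtracting \eqref{eq:muk}, the modulation by $e^{ix\cdot\xi_k}$ disappears in absolute value and the $r_k$-dilation converts the estimates \eqref{lostsomeone1}--\eqref{lostsomeone2} on $\mu_k-\tau_k$ into exactly \eqref{bruce4}--\eqref{bspringsteen4}; the Fourier-side calculation is entirely parallel.

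With the perturbation bounds in hand, the upper and lower inequalities stated in the corollary follow immediately from Proposition \ref{generation1}. To obtain the reconstruction formula I would first apply Proposition \ref{generation1} with the auxiliary parameters $(s,\vec{p},q)=(0,(2,\ldots,2),2)$; by Plancherel and the bounded overlap of the $\vec{p}$-BAPU, the spaces $M^{0,\alpha}_{(2,\ldots,2),2}$ and $m^{0,\alpha}_{(2,\ldots,2),2}$ reduce to $L_2(\bR^n)$ and $\ell_2$ with equivalent norms, and the resulting equivalence is precisely the statement that $\{\psi_{k,\ell}\}$ is a frame for $L_2(\bR^n)$. Proposition \ref{layla} then yields the bounded invertibility of the frame operator $S$ on $M^{s,\alpha}_{\vec{p},q}$, and \eqref{eq:fra} supplies the norm-convergent expansion. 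The main obstacle is the uniformity step in the middle paragraph: one must unwind the dependence of $(K,m)$ on the constants in Proposition \ref{notmylove}'s hypotheses and confirm that those constants can be taken uniform over $\{\mu_k\}_{k\in\Zn}$, so that a single dictionary $\Theta_{K,m}$ handles the entire family. Everything else is bookkeeping on top of the already-established almost diagonal algebra and perturbation results.
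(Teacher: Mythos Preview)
Your proposal is correct and follows essentially the same route as the paper: fix an $\varepsilon_0$ that works simultaneously for Propositions \ref{generation1} and \ref{layla} in both the given parameters and the $L_2$ case $(s,\vec{p},q)=(0,(2,\ldots,2),2)$, invoke Proposition \ref{notmylove} to approximate each $\mu_k$ by $\tau_k\in\Theta_{K,m}$, transfer the resulting bounds to \eqref{bruce4}--\eqref{bspringsteen4} via the dilation/modulation in \eqref{eq:system}, and conclude. The one point where you are more careful than the paper is the uniformity of $(K,m)$ over $k$: the paper simply writes ``we pick $\tau_k\in\Theta_{K,m}$'' without comment, whereas you correctly flag that this requires the constants in the hypotheses of Proposition \ref{notmylove} to be uniform in $k$ (which indeed follows from Proposition \ref{prop:psideriv} and the construction of $\theta_k^\alpha$).
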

\begin{proof}
It is easy to verify that $M^{0,\alpha}_{(2,2,\ldots,2),2}=L^2(\bR^n)$ and $m^{0,\alpha}_{(2,2,\ldots,2),2}=\ell^2$. Hence, using 
Proposition \ref{generation1} with $\vec{p}_0=(2,2,\ldots,2)$ and $q_0=2$, there exists $\varepsilon_0>0$ for which $0<\varepsilon\leq \varepsilon_0$ ensures that any system $\{\psi_{k,\ell}\}_{k,\ell\in\Zn}$ satisfying  \eqref{bruce4} and
\eqref{bspringsteen4} for $0<\varepsilon \le \varepsilon_0$ is a frame for $L^2(\Rn)$. By decreasing the value of $\epsilon_0$, if needed, we can assume that the conclusions of Propositions \ref{generation1} and \ref{layla} also holds for 
the given $\vec{p}$ and $q$ whenever $0<\varepsilon \le \varepsilon_0$. 

Pick some $0<\varepsilon \le \varepsilon_0$. Using Proposition \ref{notmylove}, we pick $\tau_k \in
\Theta_{K,m}$ such that Eqs.\ \eqref{lostsomeone1} and \eqref{lostsomeone2} are satisfied for $N=2\left(J+\delta\right)$ and $M=2\left(J+\delta\right)+\frac{2}{\beta}\left(|s|+2J+\frac{3\delta}{2}\right)$ for a chosen $\delta>0$. The system $\{\psi_{k,\ell}\}_{k,\ell\in\Zn}$ given in Eq.\ \eqref{eq:system} is thus a frame for $L^2(\Rn)$, with \begin{equation*}
 S f=\sum_{k,n\in\Zn}\langle f,\psi_{k,n}\rangle
\psi_{k,n}.
\end{equation*}
a self-adjoint invertible operator on $L^2(\Rn)$. We can now conclude using Propositions \ref{generation1} and \ref{layla} with the given $\vec{p}$ and $q$.
\end{proof}

\begin{remark}
For numerical purposes, it can be beneficial  to choose the function $g$ to be a B-spline with suitable smoothness as the reservoir $\Theta_{K,m}$ then consists of linear combinations of translated and dilated B-splines. This will e.g.\ facilitate fast multiscale numerical algorithms for approximating the inner products $\{\langle f,\psi_{k,n}\rangle\}$. 
\end{remark}

\subsection{Fourier multipliers}
Another useful application of classes of almost diagonal matrices is to study mapping properties of various linear operators between the associated smoothness spaces. In this section, we show how to use this approach to obtain relatively straightforward boundedness result for Fourier multipliers on mixed-norm $\alpha$-modulation spaces. We  mention that another (perhaps more direct) approach to Fourier multiplies on mixed-norm $\alpha$-modulation spaces is considered in  \cite[Section 6]{MR4082240}.

Let $m$ be a bounded measurable function on $\Rn$. The associated Fourier multiplier is the operator
$$m(D)f = \mathcal{F}^{-1}(m\hat{f}),$$
which is initially defined and bounded on $L^2(\Rn)$. In particular, $m(D)$ is defined on a dense subset of $M^{s,\alpha}_{\vec{p},q}(\Rn)$ provided $\vec{p}\in (0,\infty)^n$ and $0<q<\infty$. Let us fix $\alpha\in [0,1]$ and some $b\in\bR$. We assume that the multiplier function $m$ satisfies the smoothness condition
$$\sup_{\xi\in\Rn}\langle \xi\rangle^{\alpha|\eta|-b}|\partial^\eta m(\xi)|<\infty,$$
for every multi-index $\eta\in (\bN\cup\{0\})^n$. We claim that the following result holds,
\begin{equation}\label{eq:bdd}
m(D):M^{s+b,\alpha}_{\vec{p},q}(\Rn)\rightarrow M^{s,\alpha}_{\vec{p},q}(\Rn).    
\end{equation}

Calling on Theorem \ref{th:phi} and Proposition \ref{johnlennon}, it is straightforward to verify that it suffices to show that the matrix
\begin{equation}\label{eq:bdda}
 \left\{\big\langle \langle \xi_k\rangle^{-b} m(D) \varphi_{k,\ell},\varphi_{j,m}\big\rangle\right\}\in \textrm{ad}_{\vec{p},q}^s.
 \end{equation}

We first observe, using the compact support properties of the system $\{\phi_{k,\ell}\}$, that $\langle m(D)\varphi_{k,\ell},\varphi_{j,m}\rangle=0$ whenever $Q_k\cap Q_j=\emptyset$. Let us therefore focus on the case
$Q_k\cap Q_j\not=\emptyset$, where we have $r_k\asymp r_j$ (with constants independent of $k$ and $j$). The equivalence $r_k\asymp r_j$ in turn implies that, using the notation from Definition \ref{doitmad},
\begin{equation}\label{eq:bddb}
\omega_{(k,\ell)(j,m)}\asymp (1-|\ell-m|)^{-J-\delta}.
\end{equation}
We have, using Eq.\ \eqref{eq:TF},
\begin{align*}
    \langle m(D)\varphi_{k,\ell},\varphi_{j,m}\rangle&=\int_{\Rn} m(\xi)\theta_{k}^\alpha(\xi)\theta_{j}^\alpha(\xi)e_{k,\ell}(\xi)\overline{e_{j,m}}(\xi)\,d\xi,
\end{align*}
and by the affine change of variable $\xi:=T_ky:=r_k y+\xi_k$,
\begin{align*}
    \langle m(D)\varphi_{k,\ell},\varphi_{j,m}\rangle&=r_k^n\int_{\Rn} m(T_k y)\theta_{k}^\alpha(T_k y)\theta_{j}^\alpha(T_k y)e_{k,\ell}(T_k y)\overline{e_{j,m}}(T_k y)\,dy\\
    &=(2\pi)^{-n}\int_{\Rn} m(T_k y)\theta_{k}^\alpha(T_k y)\theta_{j}^\alpha(T_k y)\\&\phantom{(2\pi)^{-n}aaa}\times \exp\left[i\frac{\pi}{a}\bigg(\big(\ell-\frac{r_k}{r_j}m\big)\cdot y -\frac{r_k}{r_j}m\cdot k +m\cdot j\bigg)\right]\,dy.
\end{align*}
Hence, letting $g_{k,j}(y):=m(T_k y)\theta_{k}^\alpha(T_k y)\theta_{j}^\alpha(T_k y)$, we obtain
$$|\langle m(D)\varphi_{k,\ell},\varphi_{j,m}\rangle|\leq C\left|\cF[g_{k,j}]\bigg(\frac{\pi}{a}\bigg[\frac{r_k}{r_j}m-\ell\bigg]\bigg)\right|.$$
Now we proceed to make a standard decay estimate for the Fourier transform of $g_{k,j}$. Notice that
$\theta_{k}^\alpha(T_k \cdot)\theta_{j}^\alpha(T_k \cdot)$ is $C^\infty$ with support contained in a compact set $\Omega$ that can be chosen independent of $k$ and $j$, which can be verified using the fact that  $r_k\asymp r_j$. Hence, by the Leibniz rule, one obtains for $\beta\in (\bN\cup \{0\})^n$,
\begin{align*}|\partial^\beta [g_{k,j}](\xi)|&\leq C_\beta\mathbf{1}_\Omega(\xi)\sum_{\eta\leq \beta} \partial^\eta[m(T_k\cdot)](\xi),\qquad \xi\in \Rn.
\end{align*}
Then by the chain-rule, recalling that $r_k=\langle \xi_k\rangle^\alpha$,
\begin{align*}|\partial^\beta [g_{k,j}](\xi)|&\leq C_\beta \mathbf{1}_\Omega(\xi)\sum_{\eta\leq \beta} r_k^{|\eta|}[(\partial^\eta m)(T_k\cdot)](\xi)\\
&\leq C\sum_{\eta\leq \beta}r_k^{|\eta|} \mathbf{1}_\Omega(\xi) \langle T_k\xi\rangle^{b-\alpha |\eta|}\\
&\leq C\mathbf{1}_\Omega(\xi)\sum_{\eta\leq \beta}\langle \xi_k\rangle^{\alpha|\eta|} \langle \xi_k\rangle^{b-\alpha |\eta|},
\end{align*}
where we used that $r_k=\langle \xi_k\rangle^\alpha$.
Standard estimates now show that for any $N>0$, there exists $C_N<\infty$ such that
$$\langle \xi_k\rangle^{-b}|\langle m(D)\varphi_{k,\ell},\varphi_{j,m}\rangle|\leq C_N (1+|m-\ell|)^{-N},$$
whenever $Q_k\cap Q_j\not=\emptyset$. By the observation in Eq.\ \eqref{eq:bddb}, we may therefore conclude that \eqref{eq:bdda} holds, and consequently, that the multiplier result \eqref{eq:bdd} also holds.
\appendix

\section{Some technical lemmas}
\begin{lemma}\label{lastcall1}\noindent
Let $N>n$ and suppose the system $\{\eta_{k,\ell}\}_{k,\ell\in\Zn}$ satisfies \eqref{eq:ee1}, and
the system $\{\psi_{k,\ell}\}_{k,n\Zn}$ also satisfies \eqref{eq:ee1}. We then have
\begin{align}\label{west1aa}
|\langle\eta_{k,\ell},\psi_{j,m}\rangle|\le C
\min\bigg(\frac{r_k}{r_j},\frac{r_j}{r_k}\bigg)^{\frac{n}{2}}(1+\min(r_k,r_j)|x_{k,\ell}-x_{j,m}|)^{-N},
\end{align}
with $r_k$  and $x_{k,\ell}$ given in \eqref{eq:rk}.
\end{lemma}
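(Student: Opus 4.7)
The plan is to reduce the pairing to a pointwise weighted-integral estimate and then use a triangle-type inequality to convert the spatial decay of the molecules into the desired separation factor. By the Hermitian symmetry of the pairing and the symmetric roles of the two families of molecules in both the hypothesis and the conclusion, I will assume without loss of generality that $r_k\le r_j$. Writing $d:=|x_{k,\ell}-x_{j,m}|$, the goal reduces to
\begin{equation*}
|\langle \eta_{k,\ell}, \psi_{j,m}\rangle| \le C\,\bigl(r_k/r_j\bigr)^{n/2}(1+r_k d)^{-N}.
\end{equation*}

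Starting from $|\langle \eta_{k,\ell},\psi_{j,m}\rangle|\le \int |\eta_{k,\ell}(x)||\psi_{j,m}(x)|\,dx$ and applying \eqref{eq:ee1} to each factor (with the decay exponent in \eqref{eq:ee1} taken sufficiently large, say $M\ge 2N$), the integrand is controlled by $C r_k^{n/2}r_j^{n/2}(1+r_k|x-x_{k,\ell}|)^{-M}(1+r_j|x-x_{j,m}|)^{-M}$. The central tool is the submultiplicative triangle inequality
\begin{equation*}
(1+r_k|x-x_{k,\ell}|)(1+r_k|x-x_{j,m}|)\ge 1+r_k d,
\end{equation*}
equivalently $(1+r_k|x-x_{k,\ell}|)^{-N}\le (1+r_k|x-x_{j,m}|)^{N}(1+r_k d)^{-N}$. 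I would peel off exactly $N$ powers of decay from the $\eta$-factor and apply this inequality, bounding the residual $(1+r_k|x-x_{k,\ell}|)^{-(M-N)}$ by $1$.

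Next, since $r_k\le r_j$, the inequality $(1+r_k|x-x_{j,m}|)^{N}\le (1+r_j|x-x_{j,m}|)^{N}$ allows the transferred $r_k$-weight to be absorbed into the $\psi$-decay, leaving the integrable weight $(1+r_j|x-x_{j,m}|)^{-(M-N)}$ (provided $M-N>n$, which is why taking $M\ge 2N$ suffices). A routine change of variables gives $\int (1+r_j|x-x_{j,m}|)^{-(M-N)}\,dx\le C r_j^{-n}$, whence
\begin{equation*}
|\langle \eta_{k,\ell},\psi_{j,m}\rangle|\le C r_k^{n/2}r_j^{n/2}r_j^{-n}(1+r_k d)^{-N}=C(r_k/r_j)^{n/2}(1+r_k d)^{-N},
\end{equation*}
which is the desired estimate since $\min(r_k,r_j)=r_k$ and $\min(r_k/r_j,r_j/r_k)=r_k/r_j$ under our assumption.

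The main subtlety, and the step that must be arranged carefully, is that the triangle inequality must be applied at the \emph{lower}-frequency scale $r_k$. This is what produces a separation factor on the coarser spatial scale $(1+r_k d)^{-N}$, matching $\min(r_k,r_j)$, and leaves an amplitude ratio equal to the \emph{small} $(r_k/r_j)^{n/2}$ rather than its reciprocal. Using the $r_j$-scale for the triangle inequality would leave the wrong integration weight and produce a spurious $(r_j/r_k)^{n/2}$ factor that cannot be absorbed; the need to break the symmetry this way is the only technical point in the proof.
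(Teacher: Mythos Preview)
Your argument is correct as written, but it proves a weaker statement than the lemma. You explicitly require the spatial decay exponent $M$ in \eqref{eq:ee1} to satisfy $M\ge 2N$, so that after peeling off $N$ powers via the Peetre-type inequality and transferring them to the $\psi$-center you still retain an integrable weight $(1+r_j|x-x_{j,m}|)^{-(M-N)}$ with $M-N>n$. The paper, by contrast, reads the hypothesis as \eqref{eq:ee1} with exponent exactly $N$ (this is how the proof uses it, and how the lemma is invoked in Lemma~\ref{bubbleboy2}), and its proof returns the \emph{same} exponent $N$ in the conclusion --- no loss. If you run your argument with $M=N$, the absorption step leaves $(1+r_j|x-x_{j,m}|)^{0}=1$, which is not integrable, and the proof collapses.

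The paper avoids this loss by a case analysis in place of your global submultiplicative inequality. First the trivial regime $r_k|x_{k,\ell}-x_{j,m}|\le 1$: bound the $\eta$-factor by its sup $r_k^{n/2}$ and integrate the $\psi$-factor outright. In the remaining regime $r_k|x_{k,\ell}-x_{j,m}|>1$, split the domain of integration according to whether $|x_{k,\ell}-x|\ge\tfrac12|x_{k,\ell}-x_{j,m}|$ or not. On each piece one of the two molecule weights is already pointwise $\lesssim (1+r_k d)^{-N}$ (in the second piece after trading a factor $(r_k/r_j)^{N}$ down from the finer scale $r_j$), so all $N$ powers of separation come out for free and the \emph{other} weight is left intact for integration. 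That is what makes the input exponent equal the output exponent.

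For the downstream applications in the paper the distinction is ultimately harmless --- one may start from molecules with higher spatial decay and relabel --- but as a proof of the lemma exactly as stated, your argument has a genuine gap at $M=N$.
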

\begin{proof}
Without loss of generality assume that $r_k\le r_j$. First we
consider the case $r_k|x_{k,m}-x_{j,m}|\le 1$. It then follows
that
\begin{equation}\label{dontwanna}
\frac{r_k^{\frac{n}{2}}}{(1+r_k|x_{k,\ell}-x|)^{N}}\le
r_k^{\frac{n}{2}}\le
\frac{2^{N}r_k^{\frac{n}{2}}}{(1+r_k|x_{k,\ell}-x_{j,m}|)^{N}},
\end{equation}
and we have
\begin{align}
|\langle\eta_{k,\ell},\psi_{j,m}\rangle| &\le
\frac{Cr_k^{\frac{n}{2}}}{(1+r_k|x_{k,\ell}-x_{j,m}|)^{N}}
\int_{\Rn}\frac{r_j^{\frac{n}{2}}}{(1+r_j|x_{j,m}-x|)^{N}}\d x \notag \\
&= \frac{Cr_k^{\frac{n}{2}}}{(1+r_k|x_{k,\ell}-x_{j,m}|)^{N}}\int_{\Rn}\frac{r_j^{-\frac{n}{2}}}{(1+|x|)^{N}}\d x \notag \\
&\le C
\bigg(\frac{r_k}{r_j}\bigg)^{\frac{n}{2}}(1+r_k|x_{k,\ell}-x_{j,m}|)^{-N}, \label{workdowntown}
\end{align}
where we used that $N>n$ to estimate the integral.
 For the other case, $r_k|x_{k,m}-x_{j,m}|> 1$, we consider two additional cases.
In the first case, we assume that $|x_{k,\ell}-x| \ge
\tfrac{1}{2}|x_{k,\ell}-x_{j,m}|$. Similar to above we then get
\eqref{dontwanna} which leads to \eqref{workdowntown}. In the last case, we have $|x_{k,\ell}-x| <
\tfrac{1}{2}|x_{k,\ell}-x_{j,m}|$ which gives $|x_{j,m}-x| >
\tfrac{1}{2}|x_{k,\ell}-x_{j,m}|$. It then follows that
\begin{align*}
\frac{1}{(1+r_j|x_{j,m}-x|)^{N}}\le
\frac{1}{(1+r_j|x_{k,\ell}-x_{j,m}|)^{N}}\le
\frac{(r_k/r_j)^{N}}{(1+r_k|x_{k,\ell}-x_{j,m}|)^{N}},
\end{align*}
and we have
\begin{align*}
|\langle\eta_{k,\ell},\psi_{j,m}\rangle| &\le
\frac{C(r_k/r_j)^{\frac{n}{2}}}{(1+r_k|x_{k,\ell}-x_{j,m}|)^{N}}
\int_{\Rn}\frac{r_k^{n}}{(1+r_k|x_{k,\ell}-x|)^{N}}\d x \\
&\le C
\bigg(\frac{r_k}{r_j}\bigg)^{\frac{n}{2}}(1+r_k|x_{k,\ell}-x_{j,m}|)^{-N}.
\end{align*}
\end{proof}
The following estimate in direct space was used to prove Proposition \ref{johnlennon}.
\begin{lemma}\label{le:max}%
Suppose that $0 < r \le 1$ and $N>n/r$. Then for any sequence
$\{s_{k,\ell}\}_{k,\ell\in\Zn}\subset\bC$, and for $x\in Q(j,m)$, we have
\begin{align}
\sum_{\ell\in
\Zn}\frac{|s_{k,\ell}|}{(1+\min(r_k,r_j)|x_{k,\ell}-x_{j,m}|)^N} \le &C
\max\left(\frac{r_k}{r_j},1\right)^{\frac{n}{r}}\notag \\
&\phantom{C}\times {\mathcal M}_r\Big(\sum_{\ell\in
\Zn}|s_{k,\ell}|\,\mathbf{1}_{Q(k,\ell)}\Big)(x),\label{setty}
\end{align}
with $r_k$, $\xi_k$, and $x_{n,\ell}$ defined in Eq.\ \eqref{eq:rk}, and $Q(j,m)$ defined in Eq.\ \eqref{eq:qk}.
\end{lemma}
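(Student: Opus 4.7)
The plan is to control the left-hand side by a dyadic decomposition of the $\ell$-sum organized around the size of $\min(r_k,r_j)|x_{k,\ell}-x_{j,m}|$, and then to transfer each dyadic block from discrete to continuous via the bounded overlap of $\{Q(k,\ell)\}_\ell$ combined with the integral estimate \eqref{rectangle}. Concretely, partition $\Zn$ into the shells
\[
A_\nu := \{\ell\in\Zn : 2^{\nu-1}\le 1+\min(r_k,r_j)|x_{k,\ell}-x_{j,m}| < 2^\nu\},\qquad \nu\ge 0.
\]
Since the points $x_{k,\ell}=\tfrac{\pi}{a}r_k^{-1}\ell$ are spaced by $\tfrac{\pi}{a}r_k^{-1}$, a direct counting argument gives $\#A_\nu \lesssim (2^\nu \max(r_k/r_j,1))^n$. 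Moreover, for $x\in Q(j,m)$ we have $|x-x_{j,m}|\le r_j^{-1}\le \min(r_k,r_j)^{-1}$, so every $Q(k,\ell)$ with $\ell\in A_\nu$ (of radius $r_k^{-1}\le \min(r_k,r_j)^{-1}$) is contained in a rectangle $R_\nu^x$ centered at $x$ with side length $\lesssim 2^\nu \min(r_k,r_j)^{-1}$.

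The heart of the argument is an $L^r$-style averaging. Since $r\le 1$, we use the pointwise inequality, valid thanks to the overlap bound $\sum_\ell \mathbf{1}_{Q(k,\ell)}\le L$ and $t\mapsto t^r$ being monotone,
\[
\sum_{\ell\in A_\nu}|s_{k,\ell}|^r \mathbf{1}_{Q(k,\ell)}(y) \;\le\; L\Big(\sum_{\ell'\in\Zn}|s_{k,\ell'}|\,\mathbf{1}_{Q(k,\ell')}(y)\Big)^r.
\]
Integrating over $R_\nu^x$ and invoking \eqref{rectangle} (with $\theta=1$ applied to the $r$-th power) gives
\[
\sum_{\ell\in A_\nu}|s_{k,\ell}|^r \;\lesssim\; \frac{|R_\nu^x|}{|Q(k,\ell)|}\,\cM_r^r\Big(\sum_{\ell'\in\Zn}|s_{k,\ell'}|\,\mathbf{1}_{Q(k,\ell')}\Big)(x) \;\lesssim\; \bigl(2^\nu \max(r_k/r_j,1)\bigr)^n \cM_r^r(\cdots)(x).
\]
Applying the elementary $\ell^r$–$\ell^1$ monotonicity $\sum_{\ell\in A_\nu}|s_{k,\ell}|\le (\sum_{\ell\in A_\nu}|s_{k,\ell}|^r)^{1/r}$ (valid for $0<r\le 1$) then yields
\[
\sum_{\ell\in A_\nu}|s_{k,\ell}| \;\lesssim\; \bigl(2^\nu \max(r_k/r_j,1)\bigr)^{n/r}\, \cM_r\Big(\sum_{\ell'}|s_{k,\ell'}|\,\mathbf{1}_{Q(k,\ell')}\Big)(x).
\]

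To finish, reinsert the weight $(1+\min(r_k,r_j)|x_{k,\ell}-x_{j,m}|)^{-N}\asymp 2^{-N\nu}$ on $A_\nu$ and sum over $\nu$:
\[
\sum_{\ell\in\Zn}\frac{|s_{k,\ell}|}{(1+\min(r_k,r_j)|x_{k,\ell}-x_{j,m}|)^N} \;\lesssim\; \max(r_k/r_j,1)^{n/r}\cM_r(\cdots)(x) \sum_{\nu\ge 0} 2^{\nu(n/r-N)},
\]
and the geometric series converges precisely because $N>n/r$. The only delicate point is the geometric step of choosing the enclosing rectangle $R_\nu^x$: one must combine the estimate $|x-x_{j,m}|\le r_j^{-1}\le \min(r_k,r_j)^{-1}$ coming from $x\in Q(j,m)$ with the bound $r_k^{-1}\le \min(r_k,r_j)^{-1}$ on the radius of $Q(k,\ell)$ so that $|R_\nu^x|/|Q(k,\ell)|\lesssim (2^\nu \max(r_k/r_j,1))^n$, which is exactly what produces the factor $\max(r_k/r_j,1)^{n/r}$ in the conclusion; this also explains why the factor is one-sided in $r_k/r_j$.
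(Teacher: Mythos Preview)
Your proof is correct and follows essentially the same route as the paper: a dyadic decomposition of the $\ell$-sum according to $\min(r_k,r_j)|x_{k,\ell}-x_{j,m}|$, the $\ell^r\hookrightarrow\ell^1$ inequality for $0<r\le 1$, the bounded-overlap bound $\sum_\ell \mathbf{1}_{Q(k,\ell)}\le L$, and then passage to the maximal function via an average over an enclosing set. The only cosmetic differences are that the paper splits into the two cases $r_k\le r_j$ and $r_k>r_j$ and works with balls, while you treat both cases at once via $\min(r_k,r_j)$ and use rectangles together with \eqref{rectangle}; also, your cardinality bound $\#A_\nu\lesssim(2^\nu\max(r_k/r_j,1))^n$ is true but not actually used in the argument.
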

\begin{proof}
We begin by
considering the case $r_k\le r_j$. We define the sets,
\begin{align*}
A_0&=\{\ell\in \Zn :r_k|x_{k,\ell}-x_{j,m}|\le 1\},\\
A_i&=\{\ell\in \Zn :2^{i-1}<r_k|x_{k,\ell}-x_{j,m}|\le 2^i\},\,\,\, i\ge 1.
\end{align*}
Choose $x\in Q(j,m)$. There exists $C_1>0$ such that $\cup_{\ell\in A_i}Q(k,n) \subset {B}(x,C_12^{i}r_k^{-1})$. Putting $\omega_n:=|B(0,1)|$, we deduce from Eq.\ \eqref{eq:qk} that  $\int \mathbf{1}_{Q(k,n)}(x)\,dx=\omega_n r_k^{-n}$. Hence, we have
\begin{align*}
\sum_{\ell\in A_i}\frac{|s_{k,\ell}|}{(1+r_k|x_{k,\ell}-x_{j,m}|)^N} & \le C 2^{-iN}\sum_{\ell\in A_i}|s_{\ell,\ell}|\le C 2^{-iN}\Big(\sum_{\ell\in A_i}|s_{k,\ell}|^r\Big)^{\frac{1}{r}} \\
 & \le C 2^{-iN}\bigg(\frac{r_k^{n}}{\omega_n}\int_{{B}(x,C_12^{i}r_k^{-1})}\sum_{\ell\in A_i}|s_{k,\ell}|^r\mathbf{1}_{Q(k,\ell)}\bigg)^{\frac{1}{r}}.
\end{align*}
Using by the definition of the maximal operator \eqref{Max1}, we obtain
\begin{align*}
\sum_{\ell\in A_i}\frac{|s_{k,\ell}|}{(1+r_k|x_{k,\ell}-x_{j,m}|)^N} &\le C
2^{i(\frac{n}{r}-N)}\bigg(\frac{r_k^{n}}{2^{in}\omega_d^B}\int_{B(x,C_12^{i}r_k^{-1})}\sum_{\ell\in
A_i}|s_{k,\ell}|^r\mathbf{1}_{Q(k,\ell)}\bigg)^{\frac{1}{r}}\\
&\le C 2^{i(\frac{n}{r}-N)}{\mathcal M}_r\Big(\sum_{\ell\in
\Zn}|s_{k,\ell}|\,\mathbf{1}_{Q(k,\ell)}\Big)(x)
\end{align*}
by using $\sum_{\ell\in \Zn}\mathbf{1}_{Q(k,\ell)}\le L$. Summing over $i\ge
0$ and using $N>n/r$ gives \eqref{setty}. For the second case, $r_k
> r_j$, we redefine the sets,
\begin{align*}
A_0&=\{\ell\in \Zn :r_j|x_{k,\ell}-x_{j,m}|\le 1\}\\
A_i&=\{\ell\in \Zn :2^{i-1}<r_j|x_{k,\ell}-x_{j,m}|\le 2^i\}, i\ge 1.
\end{align*}
As before we have
\begin{align*}
\sum_{\ell\in A_i}\frac{|s_{k,\ell}|}{(1+r_j|x_{k,\ell}-x_{j,m}|)^N}\le& C
2^{-iN}\bigg(\frac{r_{k}^{n}}{\omega_n}\int_{{B}(x,C_12^{i}r_j^{-1})}\sum_{\ell\in
A_i}|s_{k,\ell}|^r\mathbf{1}_{Q(k,\ell)}\bigg)^{\frac{1}{r}} \\
\le& C
2^{i(\frac{n}{r}-N)}\left(\frac{r_k}{r_j}\right)^{\frac{n}{r}}{\mathcal M}_r\Big(\sum_{\ell\in
\Zn}|s_{k,\ell}|\,\mathbf{1}_{Q(k,\ell)}\Big)(x).
\end{align*}
Summing over $i\ge0$ again gives \eqref{setty}.\\
\end{proof}
To prove Proposition \ref{johnlennon} we also used the following estimate in frequency space.
\begin{lemma}\label{le:sum}%
Let $\delta>0$,  $\alpha\in [0,1)$, and   let $r_k$ and $\xi_k$ be defined as in Eq.\ \eqref{eq:rk} for $k\in\Zn$. Then we have.
\begin{enumerate}
    \item[(i)] There exists $C_a>0$ independent of $k\in\Zn$ such that\\
$\displaystyle
\sum_{j\in \Zn} \min\bigg(\bigg(\frac{r_j}{r_k}\bigg)^{n},\bigg(\frac{r_k}{r_j}\bigg)^{\delta}\bigg)
(1+\max(r_k,r_j)^{-1}|\xi_j-\xi_k|)^{-n-\delta}\le C_a.$\\
 \item[(ii)] There exists $C_b>0$ independent of $j\in\Zn$ such that\\
$\displaystyle
\sum_{k\in \Zn} \min\bigg(\bigg(\frac{r_j}{r_k}\bigg)^{n},\bigg(\frac{r_k}{r_j}\bigg)^{\delta}\bigg)
(1+\max(r_k,r_j)^{-1}|\xi_j-\xi_k|)^{-n-\delta}\le C_b.$
\end{enumerate}

\end{lemma}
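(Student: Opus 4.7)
The plan is to prove both (i) and (ii) by the same two-step strategy: convert the discrete sum into an integral using the bounded-overlap $\alpha$-covering $\{B_j^\alpha\}$ from Example \ref{ex:cov}, then estimate the resulting integral by splitting its domain into regions on which the integrand simplifies. The passage from sum to integral relies on two uniform observations: first, $|B_j^\alpha|\asymp r_j^n$ together with bounded overlap; and second, whenever $\xi\in B_j^\alpha$ we have $r_j\asymp \langle\xi\rangle^\alpha$ (from $\langle\xi_j\rangle^\alpha\asymp r_j$ combined with moderateness of $\langle\cdot\rangle^\alpha$), while the weight factor $(1+\max(r_j,r_k)^{-1}|\xi_j-\xi_k|)^{-n-\delta}$ changes by at most a bounded multiplicative constant when $\xi_j$ is replaced by $\xi$, because the perturbation $|\xi-\xi_j|\le c_1 r_j\le c_1\max(r_j,r_k)$ is absorbed into the argument of the polynomial weight. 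After this reduction, (i) amounts to proving
$$I_k:=\int_{\Rn}\langle\xi\rangle^{-\alpha n}\min\big((\langle\xi\rangle^\alpha/r_k)^n,(r_k/\langle\xi\rangle^\alpha)^\delta\big)\big(1+\max(r_k,\langle\xi\rangle^\alpha)^{-1}|\xi-\xi_k|\big)^{-n-\delta}\,d\xi\le C,$$
with $C$ independent of $k$.

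To bound $I_k$ I would split the domain into $R_1=\{\langle\xi\rangle^\alpha\le r_k\}$ and $R_2=\{\langle\xi\rangle^\alpha>r_k\}$. On $R_1$ the integrand collapses to $r_k^{-n}(1+r_k^{-1}|\xi-\xi_k|)^{-n-\delta}$, whose integral over $\Rn$ equals a fixed constant under the substitution $\eta=r_k^{-1}(\xi-\xi_k)$. On $R_2$ the integrand becomes $r_k^\delta\langle\xi\rangle^{-\alpha(n+\delta)}(1+\langle\xi\rangle^{-\alpha}|\xi-\xi_k|)^{-n-\delta}$, and I would split $R_2$ further by comparing $|\xi-\xi_k|$ with $\langle\xi\rangle^\alpha$: when $|\xi-\xi_k|\le\langle\xi\rangle^\alpha$, moderateness \eqref{strawberry} forces $\langle\xi\rangle^\alpha\asymp r_k$, so the integrand is $\asymp r_k^{-n}$ on a region of volume $\lesssim r_k^n$, giving a bounded contribution; when $|\xi-\xi_k|>\langle\xi\rangle^\alpha$ the integrand reduces to $r_k^\delta|\xi-\xi_k|^{-n-\delta}$, and since the constraint $\langle\xi\rangle^\alpha>r_k$ forces $|\xi-\xi_k|>r_k$, radial integration yields $O(1)$.

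Part (ii) follows by running the same strategy with $k$ as summation index and $j$ fixed, but here the min-factor has the opposite asymmetry $\min((r_j/\langle\xi\rangle^\alpha)^n,(\langle\xi\rangle^\alpha/r_j)^\delta)$, so the integrand on each piece is structurally different. On $\{\langle\xi\rangle^\alpha>r_j\}$ the integrand is $r_j^n\langle\xi\rangle^{-2\alpha n}(1+\langle\xi\rangle^{-\alpha}|\xi-\xi_j|)^{-n-\delta}$, and on $\{\langle\xi\rangle^\alpha\le r_j\}$ it is $r_j^{-\delta}\langle\xi\rangle^{-\alpha(n-\delta)}(1+r_j^{-1}|\xi-\xi_j|)^{-n-\delta}$. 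In each piece, a further split based on $|\xi-\xi_j|$ versus $\langle\xi\rangle^\alpha$ (respectively $r_j$), together with $\langle\xi\rangle\asymp\langle\xi_j\rangle\asymp r_j^{1/\alpha}$ near the diagonal and $\langle\xi\rangle\asymp|\xi|$ far away, reduces each contribution to an elementary integral; the hypothesis $\alpha<1$ enters through the condition $\alpha n+\delta(1-\alpha)>0$ that secures integrability at infinity. The main obstacle I anticipate is the careful bookkeeping required in the sum-to-integral reduction, verifying that substituting $\xi\in B_j^\alpha$ for $\xi_j$ in both the min- and weight-factors incurs only constants independent of $j$ and $k$ in both regimes $r_j\le r_k$ and $r_j>r_k$; this amounts to systematic use of the moderateness estimates \eqref{eq:b} and \eqref{strawberry} distinguished by which of $r_j,r_k$ dominates.
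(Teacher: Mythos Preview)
Your strategy is correct and will go through, but it is organized quite differently from the paper's argument. The paper stays entirely on the discrete side: it slices the index set into dyadic shells $A_i=\{j:2^{i-1}\rho_1 r_k<|\xi_j-\xi_k|\le 2^i\rho_1 r_k\}$, observes that $\bigcup_{j\in A_i}B(\xi_j,r_j)\subset B(\xi_k,C2^ir_k)$, and then uses bounded overlap of the balls $B(\xi_j,r_j)$ to turn $\sum_{j\in A_i}(r_j/r_k)^n$ into a volume ratio, producing geometric decay $2^{-i\delta}$. Your route first passes to a continuous integral via $1\asymp r_j^{-n}\int_{B_j^\alpha}d\xi$ and the replacement $r_j\leadsto\langle\xi\rangle^\alpha$, $\xi_j\leadsto\xi$, and only then carries out a spatial decomposition. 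Both arguments rest on the same structural input---bounded overlap plus $|B_j^\alpha|\asymp r_j^n$---so neither is deeper than the other; the paper's version avoids having to justify the sum-to-integral substitution but pays for it with a separate treatment of part~(ii), while yours gives a unified picture at the cost of that justification.

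One point where you should be careful: in part~(ii) on the region $\{\langle\xi\rangle^\alpha\le r_j\}$ with $\delta<n$, the crude bound $\langle\xi\rangle^{-\alpha(n-\delta)}\le 1$ is \emph{not} enough by itself (it leaves a factor $r_j^{n-\delta}$). You will genuinely need the further split you allude to, comparing $|\xi|$ with $|\xi_j|$: when $|\xi|\le\tfrac12|\xi_j|$ one has $|\xi-\xi_j|\asymp r_j^{1/\alpha}$ and must integrate $\langle\xi\rangle^{\alpha(\delta-n)}$ over the ball of radius $\asymp r_j^{1/\alpha}$, which after bookkeeping yields a contribution $\asymp r_j^{-\delta(1-\alpha)/\alpha}$; when $|\xi|>\tfrac12|\xi_j|$ the constraint $\langle\xi\rangle^\alpha\le r_j$ forces $\langle\xi\rangle^\alpha\asymp r_j$ and the piece reduces to $r_j^{-n}\int(1+r_j^{-1}|\xi-\xi_j|)^{-n-\delta}d\xi$. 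This is exactly where $\alpha<1$ is essential, consistent with your remark about $\alpha n+\delta(1-\alpha)>0$.
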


\begin{proof}
We first consider (i) and begin by partitioning the $j$ indices in the following sets,
\begin{align*}
A_0&=\{j\in \Zn:|\xi_j-\xi_k|\le \rho_1 r_k\} \\
A_i&=\{j\in \Zn:2^{i-1}\rho_1 r_k <|\xi_j-\xi_k|\le 2^i \rho_1 r_k\},\,\,\, i\ge 1,
\end{align*}
with $\rho_1$ defined in Eq.\ \eqref{strawberry}. There exists $C_1>0$ independent of $i$ such that for $j\in A_i$, we have $B(\xi_j,r_j)
\subset B(\xi_k,C_12^i r_k)$, which follows from using \eqref{strawberry} with  $\xi\in {B}(\xi_j,r_j)$:
\begin{align*}
|\xi_k-\xi| \le |\xi_k-\xi_j|+|\xi_j-\xi| &\le 2^i\rho_1 r_k+r_j\\
&\le 2^i\rho_1 r_k+R_1 2^i r_k\\
&= C_1 2^i r_k.
\end{align*}
Next, we divide the sum even further
 by first looking at $r_k\ge r_j$, and by using that the covering $\{{B}(\xi_j,r_j)\}_j$ is admissible, we obtain with $\omega_n:=|B(0,1)|$,
\begin{align*}
\sum_{\substack{j\in A_i \\ j: r_j\le r_k}} \bigg(\frac{r_j}{r_k}\bigg)^{n}& (1+r_k^{-1}|\xi_j-\xi_k|)^{-n-\delta}\\
\le & C2^{-i(n+\delta)} \sum_{\substack{j\in A_i \\ j: r_j\le r_k}} \bigg(\frac{r_j}{r_k}\bigg)^{n}\frac{1}{\omega_n r_j^n}\int_{B(\xi_j,r_j)}\mathbf{1}_{B(\xi_j,r_j)}(\xi)\d \xi\\
\le & C2^{-i(n+\delta)}\frac{1}{\omega_n r_k^n}\int_{{B}(\xi_k,C_12^i r_k)}\sum_{\substack{j\in A_i \\ j: r_j\le r_k}}\mathbf{1}_{{B}(\xi_j,r_j)}(\xi)\d \xi\\
\le & C2^{-i\delta}.
\end{align*}
Summing over $i$ gives the lemma for the $r_k\ge r_j$ part of the sum.
 In a similar way, the result for $r_k<r_j$ follows by using
\begin{align*}
\sum_{\substack{j\in A_i \\ j: r_j> r_k}} \bigg(\frac{r_k}{r_j}\bigg)^{\delta} (1+r_j^{-1}|\xi_j-\xi_k|)^{-n-\delta}
\le\sum_{\substack{j\in A_i \\ j: r_j> r_k}} \bigg(\frac{r_j}{r_k}\bigg)^{n} (1+r_k^{-1}|\xi_j-\xi_k|)^{-n-\delta},
\end{align*}
which is obtained by observing that
$$(1+r_j^{-1}|\xi_j-\xi_k|)^{-n-\delta}\leq \bigg(\frac{r_k}{r_j}+r_j^{-1}|\xi_j-\xi_k|\bigg)^{-n-\delta}=
\bigg(\frac{r_j}{r_k}\bigg)^{n+\delta}(1+r_k^{-1}|\xi_j-\xi_k|)^{-n-\delta}.$$
We now turn to estimate (ii), where we need to make some minor modifications to the general proof. In the case $r_k\geq r_j$, and $\delta\leq n$, we use the estimate
$$\bigg(\frac{r_j}{r_k}\bigg)^n \big(1+r_k^{-1}|\xi_j-\xi_k|\big)^{-n-\delta}
\leq \bigg(\frac{r_k}{r_j}\bigg)^\delta \big(1+r_j^{-1}|\xi_j-\xi_k|\big)^{-n-\delta},
$$
and proceed exactly as above using the modified sets
\begin{align*}
B_0&=\{k\in \Zn:|\xi_j-\xi_k|\le \rho_1 r_j\} \\
B_i&=\{k\in \Zn:2^{i-1}\rho_1 r_j <|\xi_j-\xi_k|\le 2^i \rho_1 r_j\},\,\,\, i\ge 1.
\end{align*}
In case  $r_k\geq r_j$, and $\delta> n$, we note that
$$\big(1+r_k^{-1}|\xi_j-\xi_k|\big)^{-n-\delta}\leq \big(1+r_k^{-1}|\xi_j-\xi_k|\big)^{-2n},$$
to obtain
$$\bigg(\frac{r_j}{r_k}\bigg)^n \big(1+r_k^{-1}|\xi_j-\xi_k|\big)^{-n-\delta}
\leq \bigg(\frac{r_k}{r_j}\bigg)^n \big(1+r_j^{-1}|\xi_j-\xi_k|\big)^{-2n},
$$
and then proceed as in the previous case. We use similar modifications in the case $r_k<r_j$.
\end{proof}

\end{document}